\documentclass[reqno,11pt,psamsfonts]{amsart}

\usepackage[all]{xy}
\usepackage{amsthm}
\usepackage{amssymb}
\usepackage{amsmath,amscd}
\usepackage{mathrsfs}
\usepackage[dvips]{graphicx, color}


\usepackage[dvips]{graphicx, color}


\def\a{\alpha}
\def\b{\beta}
\def\c{\gamma}
\def\d{\delta}

\def\g{\gamma}
\def\l{\lambda}


\def\NN{{\mathbb N}}
\def\PP{{\mathbb P}}

\def\RR{{\mathbb R}}

\def\ZZ{{\mathbb Z}}


\def\cal{\mathcal}

\def\cA{{\cal A}}

\def\cD{{\cal D}}

\def\cP{{\cal P}}

\def\cV{{\cal V}}



\def\Aut{\operatorname{Aut}}

\def\dim{\operatorname{dim}}

\def\Ext{\operatorname {Ext}}

\def\GL{\operatorname {GL}}

\def\GKdim{\operatorname{GKdim}}
\def\GL{\operatorname{GL}}
\def\gldim{\operatorname{gldim}}

\def\GrMod{\operatorname{GrMod}}

\def\GrAut{\operatorname{GrAut}}

\def\id{\operatorname {id}}

\def\PGL{\operatorname{PGL}}

\def\<{\langle}
\def\>{\rangle}



\def\kxy{k \langle x,y \rangle}
\def\kxyz{k \langle x,y,z \rangle}


\def\rnum#1{\expandafter{\romannumeral #1}}
\def\Rnum#1{\uppercase\expandafter{\romannumeral #1}}

\theoremstyle{plain} 
\newtheorem{theorem}{Theorem}[section]

\newtheorem{lemma}[theorem]{Lemma}

\theoremstyle{definition}
\newtheorem{definition}[theorem]{Definition}

\theoremstyle{remark}

\newtheorem{remark}[theorem]{Remark}

\numberwithin{equation}{section}


\begin{document}
	\pagenumbering{arabic}
	
	\title[Defining relations of 3-dim. cubic AS-reg alg. of Type P, S and T]
	{Defining relations of 3-dimensional cubic AS-regular algebras
	of Type P, S and T}
	
	\author{Masaki Matsuno and Yu Saito}
	
	\address{Institute of Arts and Sciences,
		Tokyo university of Science, 6-3-1 Niijuku, Katsushika-ku, Tokyo 125-8585, JAPAN}
	\email{masaki.matsuno@rs.tus.ac.jp}
	\address{Graduate School of Integrated Science and Technology,
		Shizuoka University, Ohya 836, Shizuoka 422-8529, Japan}
	\email{saito.yu.18@shizuoka.ac.jp}
	
	\keywords {Graded algebras, Artin-Schelter regular algebras, Cubic algebras, Geometric algebras}
	
	\thanks {{\it 2020 Mathematics Subject Classification}: 14A22, 16W50, 16S38, 16D90, 16E65}
	
	
	\maketitle
	\begin{abstract} 
		Classification of AS-regular algebras is one of the most important projects in noncommutative algebraic geometry.
		Recently, Itaba and the first author gave a complete list of defining relations of $3$-dimensional quadratic AS-regular algebras
		by using the notion of geometric algebra and twisted superpotential.
		In this paper, we extend the notion of geometric algebra to cubic algebras, and give a geometric condition for
		isomorphism and graded Morita equivalence.
		One of the main results is a complete list of defining relations of $3$-dimensional cubic AS-regular algebras
		corresponding to $\PP^1 \times \PP^1$ or a union of irreducible divisors of bidegree $(1,1)$ in $\PP^1 \times \PP^1$.
		Moreover, we classify them up to isomorphism and up to graded Morita equivalence in terms of their defining relations.
	\end{abstract}
	\section{Introduction}
	In \cite{AS}, Artin and Schelter introduced a nice class of noncommutative graded algebras, called Artin-Schelter regular (AS-regular) algebras,
	which have the same homological properties as the commutative polynomial algebra.
	They proved that every $3$-dimensional AS-regular algebra finitely generated in degree $1$ has
	either $3$ generators and $3$ quadratic defining relations (the quadratic case), or $2$ generators and $2$ cubic defining relations (the cubic case).
	Although Artin and Schelter attempted to determine defining relations of all $3$-dimensional AS-regular algebras finitely generated in degree $1$,
	they gave a list of defining relations of $``$generic$"$ $3$-dimensional AS-regular algebras in
	\cite[Table 3.11]{AS} (the quadratic case) and \cite[Table 3.9]{AS} (the cubic case) by using a computer software.
	In \cite{ATV}, Artin, Tate and Van den Bergh proved that every $3$-dimensional AS-regular algebra finitely generated in degree $1$
	determines and is determined by a pair $(E,\sigma)$ where $E$ is a scheme and $\sigma$ is an automorphism of $E$.
	Moreover, a list of pairs corresponding to $``$generic$"$ $3$-dimensional AS-regular algebras was given in \cite[4.13]{ATV}.
	Their work told us that algebraic geometry is a useful tool to study even noncommutative graded algebras, and
	is considered as a starting point of the research field noncommutative algebraic geometry.
	
	Although Artin, Tate and Van den Bergh found a nice one-to-one correspondence between
	$3$-dimensional AS-regular algebras and pairs $(E,\sigma)$,
	a complete list of defining relations of $``$all$"$ $3$-dimensional AS-regular algebras
	was not appeared in the literature.
	Focusing on pairs $(E,\sigma)$, Mori \cite{M} introduced the notion of geometric algebra $\cA(E,\sigma)$.
	It is a quadratic algebra which determines and is determined by a pair $(E,\sigma)$ (see \cite[Definition 4.3]{M}).
	It follows from \cite[Proposition 4.3]{ATV} that every $3$-dimensional quadratic AS-regular algebra is a geometric algebra.
	By using (G2) condition in the definition of geometric algebra,
	we can directly calculate the defining relations of geometric algebra, so
	this notion plays an important role to determine the defining relations of $3$-dimensional quadratic AS-regular algebras.
	In fact, Itaba and the first author gave a complete list of defining relations of $3$-dimensional quadratic AS-regular algebras
	by using the notion of geometric algebra and twisted superpotential
	(see \cite{IM0}, \cite{IM}, \cite{MM} in detail).
	Our results mean that the original Artin-Schelter's project in the quadratic case is complete.
	
	The next natural project is to determine the defining relations of $3$-dimensional cubic AS-regular algebras.
	Although a complete list of superpotentials (defining relations) of all $3$-dimensional cubic AS-regular algebras which are $``$Calabi-Yau$"$ was given in \cite{MU2},
	no complete list of defining relations of $``$all$"$ $3$-dimensional cubic AS-regular algebras has appeared in the literature.
	The ultimate goal of our project is 
	\begin{enumerate}
		\item[{\rm (I)}]
		to give a complete list of defining relations of $``$all$"$ $3$-dimensional cubic AS-regular algebras,
		\item[{\rm (II)}]
		to classify them up to isomorphism in terms of their defining relations, and
		\item[{\rm (III)}]
		to classify them up to graded Morita equivalence in terms of their defining relations.
	\end{enumerate}
	As a first step, we extend the notion of geometric algebra $\cA(E,\sigma)$ to cubic algebras (see Definition \ref{defin.GA}).
	In the rest of this paper, we tacitly assume that all geometric algebras are cubic algebras.
	For geometric algebras, we give a nice geometric condition to classify them up to isomorphism of graded algebra and graded Morita equivalence
	(see Theorems \ref{thm.G1} and \ref{thm.G2}).
	It follows from \cite[Proposition 4.3]{ATV} that every $3$-dimensional cubic AS-regular algebra is a geometric algebra $\cA(E,\sigma)$.
	Moreover, $E$ is $\PP^1 \times \PP^1$ or a divisor of bidegree $(2,2)$ in $\PP^1 \times \PP^1$.
	In this paper, we completed our project in the cases that $E$ is $\PP^1 \times \PP^1$ (called Type P), or a union of irreducible divisors of bidegree $(1,1)$ in $\PP^1 \times \PP^1$
	(called Type S or Type T)
	(see Theorems \ref{thm.main1} and \ref{thm.main2}).
	We remark that when we classify $3$-dimensional cubic AS-regular algebras, we need to classify divisors of bidegree $(2,2)$ in $\PP^1 \times \PP^1$ up to equivalence (see Definition \ref{defin.2PE}).
	If two divisors are equivalent, then they are isomorphic, however, the converse is not true.
	This is a major difference from the classification of $3$-dimensional quadratic AS-regular algebras and the most difficult aspect of the cubic case.
	
	This paper is organized as follows:
	In Section 2, we recall the definition of an Artin-Schelter regular algebra from \cite{AS}, a twisted algebra from \cite{Z}, and a derivation-quotient algebra from
	\cite{BSW}, \cite{MS}, \cite{IM}.
	In Section 3, we extend the notion of geometric algebra to cubic algebras (see Definition \ref{defin.GA}), and
	give a geometric condition to classify geometric algebras up to isomorphism of graded algebra and graded Morita equivalence (see Theorems \ref{thm.G1} and \ref{thm.G2}).
	We also show that a twisted algebra of a geometric algebra is also geometric (see Theorem \ref{thm.GM}).
	Finally, in Section 4, we give a complete list of defining relations of $3$-dimensional cubic AS-regular algebras $\cA(E,\sigma)$ where $E$ is $\PP^1 \times \PP^1$ or
	a union of irreducible divisors of bidegree $(1,1)$ in $\PP^1 \times \PP^1$, and conditions for isomorphism and graded Morita equivalence in terms of their defining relations
	(see Theorems \ref{thm.main1} and \ref{thm.main2}).
	
	\section{Preliminary}
	Throughout this paper, we fix an algebraically closed field $k$ of characteristic zero, and assume that
	all algebras, vector spaces and unadorned tensor products are over $k$.
	A graded algebra means an $\NN$-graded algebra $A=\oplus_{i \in \NN}A_i$.
	A \textit{connected} graded algebra $A$ is a graded algebra such that $A_{0}=k$.
	
	Let $V$ be a finite dimensional vector space.
	We denote by $T(V)$ the \textit{tensor algebra} of $V$ over $k$.
	By setting $T(V)_i=V^{\otimes i}$, $T(V)$ becomes an $\NN$-graded algebra.
	A \textit{cubic algebra} is an $\NN$-graded algebra of the form $T(V)/(R)$
	where $R$ is a subspace of $V^{\otimes 3}$.
	If a connected graded algebra $A$ is finitely generated in degree $1$ over $k$, then $A$ is isomorphic to a graded quotient algebra $T(A_1)/I$
	where $I$ is an two-sided ideal generated by homogeneous elements of $T(A_1)$.
	We denote by $\GrAut_k\,A$ the group of graded algebra automorphisms of $A$.
	
	Let $A$ be a graded algebra. We denote by $\GrMod A$ the category of graded right $A$-modules and 
	graded right $A$-module homomorphisms preserving degrees.
	We say that two graded algebras $A$ and $A'$ are \textit{graded Morita equivalent} if the categories $\GrMod A$ and $\GrMod A'$ are equivalent.
	If $A$ is connected, then we view $k=A/A_{\ge 1}$ as a graded right $A$-module. 
	
	\subsection{Artin-Schelter regular algebras}
	Let $A$ be a graded algebra.
	We recall that
	\begin{center}
		$\GKdim A:=\inf\{\a \in \RR \mid \dim(\sum_{i=0}^{n}A_i) \leq n^{\a} \,\,\,\text{for all}\,\,\,n \gg 0\}$
	\end{center}
	is called the \textit{Gelfand-Kirillov dimension} of $A$.
	In noncommutative algebraic geometry,
	Artin-Schelter regular algebras are main objects to study.
	
	\begin{definition}[{\cite{AS}}]\label{def.AS}
		A connected graded algebra $A$ is called a $d$-dimensional Artin-Schelter regular (simply AS-regular) algebra
		if $A$ satisfies the following conditions:
		\begin{enumerate}
			\item[{\rm (1)}]
			$\gldim A=d < \infty$,
			\item[{\rm (2)}]
			$\GKdim A<\infty$,
			\item[{\rm (3)}]
			$\Ext^{i}_{A}(k,A)=
			\begin{cases}
				k \quad\quad(i=d), \\
				0 \quad\quad(i \neq d).
			\end{cases}$
		\end{enumerate}
	\end{definition}
	
	It follows from \cite[Theorem 1.5 (i)]{AS} that
	a $3$-dimensional AS-regular algebra $A$ finitely generated in degree $1$ over $k$ is
	one of the following forms:
	\begin{center}
		$A=\kxyz/(f_1,f_2,f_3)$
	\end{center}
	where $f_i$ are homogeneous elements of degree $2$ (quadratic case), or
	\begin{center}
		$A=\kxy/(g_1,g_2)$
	\end{center}
	where $g_j$ are homogeneous elements of degree $3$ (cubic case).
	In this paper, we focus on studying $3$-dimensional cubic AS-regular algebras.
	
	Let $T(V)/(R)$ be a cubic algebra such that $\dim V=\dim R=2$.
	We choose a basis $\{x_1,x_2\}$ for $V$ and a basis $\{f_1,f_2\}$ for $R$.
	We may write $f_i=m_{i1}x_1+m_{i2}x_2$ for some $2 \times 2$ matrix
	$\bf{M}=
	\begin{pmatrix}
		m_{11} & m_{12} \\
		m_{21} & m_{22}
	\end{pmatrix}$ 
	whose entries are in $V \otimes V$.
	We define $g_i=x_1m_{1i}+x_{2}m_{2i}$.
	If we write ${\bf x}=(x_1,x_2)^{t}$, ${\bf f}=(f_1,f_2)^{t}$ and ${\bf g}=(g_1,g_2)$, then
	${\bf f}={\bf Mx}$ and ${\bf g}={\bf x}^{t}{\bf M}$.
	We say that
	$T(V)/(R)$ is \textit{standard} if there exists a choice of a basis $\{x_1,x_2\}$ for $V$
	and a basis $\{f_1,f_2\}$ for $R$ such that ${\bf g}^{t}=Q{\bf f}$ for some $Q \in \GL_2(k)$.
	Viewing ${\bf M} \in M_2(k[x_1,x_2] \circ k[x_1,x_2])$,
	we have the common zero set of the entries of ${\bf M}$ in $\PP^1 \times \PP^1$
	where $k[x_1,x_2] \circ k[x_1,x_2]$ denotes the Segre product of the copies of $k[x_1,x_2]$.
	\begin{theorem}[{\cite[Theorem 1]{ATV}}]\label{thm.ATV}
		Let $V$ be a $2$-dimensional vector space and $R$ a $2$-dimensional subspace of $V^{\otimes 3}$.
		Then $T(V)/(R)$ is a $3$-dimensional AS-regular algebra if and only if
		it is standard and the common zero set of the entries of ${\bf M}$ is empty in $\PP^1 \times \PP^1$.
	\end{theorem}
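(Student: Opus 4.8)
The plan is to characterize AS-regularity of $A=T(V)/(R)$ through the existence of a short, self-dual free resolution of the trivial module $k$, and to read off both the \emph{standard} condition and the \emph{empty zero locus} condition from that resolution. The candidate is the complex
\[
\cC:\quad 0 \to A(-4) \xrightarrow{\,{\bf x}\,} A(-3)^2 \xrightarrow{\,{\bf M}\,} A(-1)^2 \xrightarrow{\,{\bf x}^t\,} A \to k \to 0
\]
of graded right $A$-modules, where ${\bf x}^t$ is left multiplication by the row $(x_1,x_2)$, ${\bf M}$ is left multiplication by the relation matrix on columns, and the leftmost ${\bf x}$ is left multiplication by the column $(x_1,x_2)^t$.

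First I would check that $\cC$ is a complex. The composite $A(-4)\to A(-1)^2$ is left multiplication by ${\bf M}{\bf x}={\bf f}$, which vanishes in $A$ because the $f_i$ are the defining relations; this is automatic. The composite $A(-3)^2\to A$ is left multiplication by ${\bf x}^t{\bf M}={\bf g}$, which vanishes in $A$ exactly when $g_1,g_2\in R$; the \emph{standard} condition ${\bf g}^t=Q{\bf f}$ with $Q\in\GL_2(k)$ records this together with the requirement that the $g_i$, like the $f_i$, form a basis of the $2$-dimensional space $R$. Thus standardness is precisely what makes $\cC$ a complex of the correct, self-dual type. Indeed, applying $\Hom_A(-,A)$ to $\cC$ and invoking ${\bf g}^t=Q{\bf f}$ shows that $\cC$ is self-dual up to the shift $(4)$ and the twist $Q$: the dual of ${\bf x}^t$ is the leftmost ${\bf x}$, while the dual of ${\bf M}$ is ${\bf M}$ again after transposition and interchange of the two tensor factors. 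This self-duality is exactly the Gorenstein symmetry demanded by condition (3) of Definition \ref{def.AS}.

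For the implication ($\Leftarrow$), assuming standardness and emptiness of the common zero locus, the remaining task is to prove that $\cC$ is \emph{exact}. Granting exactness, $\cC$ is a finite free resolution of $k$ of length $3$, so $\gldim A=3$; its self-duality yields $\Ext^i_A(k,A)=k$ for $i=3$ and $0$ otherwise; and its Euler characteristic forces the Hilbert series $H_A(t)=\big((1-t)^2(1-t^2)\big)^{-1}$, so that $\GKdim A=3<\infty$. Hence $A$ is $3$-dimensional AS-regular. Exactness at $A$ is the augmentation and the endpoints are straightforward, so the essential content is exactness at the two middle terms $A(-1)^2$ and $A(-3)^2$. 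This is where the geometry enters: a cycle in $A(-1)^2$ not lying in the image of ${\bf M}$ would, through the associated point-module description in $\PP^1\times\PP^1$, force a point $(p,q)$ at which all four entries $m_{ij}$ of ${\bf M}$ vanish simultaneously, contradicting emptiness. Concretely I would regard ${\bf M}$ as a $2\times 2$ matrix of global sections of $\cO(1,1)$ on $\PP^1\times\PP^1$ and deduce the required exactness from the base-point-freeness of the $m_{ij}$ (equivalently, from the ideal generated by the entries of ${\bf M}$ having maximal codimension).

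For the converse ($\Rightarrow$), I would begin from the minimal graded free resolution of $k$. Since $A$ has two degree-$1$ generators and, by \cite[Theorem 1.5 (i)]{AS}, two degree-$3$ relations, minimality together with the AS-regular/Gorenstein conditions pins down the Betti numbers and forces the resolution into the self-dual shape $\cC$. Reading off the differentials recovers a relation matrix ${\bf M}$ with ${\bf x}^t{\bf M}={\bf g}=0$ in $A$, that is, standardness; and exactness of this resolution at $A(-1)^2$ rules out a common zero of the $m_{ij}$, since such a zero would produce a nontrivial cycle and break exactness, contradicting $\gldim A=3$. The main obstacle is the single geometric heart of the argument common to both directions: translating the statement that the entries of ${\bf M}$ have no common zero in $\PP^1\times\PP^1$ into exactness of $\cC$ in the middle. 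This is precisely the step that requires the point-module machinery of \cite{ATV} and a non-degeneracy analysis of the matrix of bidegree-$(1,1)$ forms.
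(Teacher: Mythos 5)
You should know at the outset that the paper contains no proof of this statement: Theorem~\ref{thm.ATV} is quoted directly from \cite[Theorem 1]{ATV} and used as a black box, so there is no in-paper argument to compare yours against; your attempt can only be judged against what a complete proof would require. On that score, your architecture is the right one (it is essentially the architecture of the original Artin--Tate--Van den Bergh argument): the candidate complex
$0 \to A(-4) \xrightarrow{{\bf x}} A(-3)^2 \xrightarrow{{\bf M}} A(-1)^2 \xrightarrow{{\bf x}^t} A \to k \to 0$,
with standardness accounting for the fact that this is a complex of self-dual (Gorenstein) shape, and the geometric condition accounting for exactness. Your bookkeeping is also correct: ${\bf M}{\bf x}={\bf f}$ kills one composite automatically, ${\bf x}^t{\bf M}={\bf g}$ kills the other exactly when $g_1,g_2 \in R$, invertibility of $Q$ in ${\bf g}^t=Q{\bf f}$ is what identifies the $\Hom_A(-,A)$-dual complex with the original one, and, granted exactness, one reads off $\pd_A k=3$, the Hilbert series $\bigl((1-t)^2(1-t^2)\bigr)^{-1}$, and the Gorenstein condition.

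The genuine gap is that the single substantive step --- the equivalence between emptiness of the common zero locus of the $m_{ij}$ in $\PP^1 \times \PP^1$ and exactness of the complex at $A(-1)^2$ and $A(-3)^2$ --- is asserted rather than proved, in both directions. In $(\Leftarrow)$ you say a cycle not in the image of ${\bf M}$ would ``force a point $(p,q)$ at which all four entries vanish,'' and in $(\Rightarrow)$ you say a common zero ``would produce a nontrivial cycle''; but a syzygy over the noncommutative algebra $A$ does not localize to a point of $\PP^1\times\PP^1$ by any formal manipulation, nor does a point formally produce a syzygy. (The honest link runs through multilinearization: a common zero $(p,q)$ of the entries of ${\bf M}$ puts the whole fiber $\{(p,q,r) : r \in \PP^1\}$ inside $\cV(R)$, i.e.\ gives a one-parameter family of truncated point modules with the same truncation, and one must then relate the point scheme to the free resolution --- which is exactly the content of \cite{ATV}, not a routine verification.) Deferring this step to ``the point-module machinery of \cite{ATV}'' is circular here, since that machinery is developed precisely in the course of proving the theorem you are asked to prove. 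A second, smaller omission of the same kind: exactness at $A(-4)$, i.e.\ injectivity of ${\bf x}$, is not ``straightforward'' --- it requires knowing $A$ has no left annihilator of $\{x_1,x_2\}$, which again comes out of the same analysis (or out of Hilbert-series bootstrapping that itself presupposes exactness elsewhere). What you have is a correct reduction of the theorem to this key lemma, together with a correct derivation of the homological consequences once the lemma is granted; it is not yet a proof.
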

	
	\subsection{Twisted algebras}
	We recall the notion of twisting system and twisted algebra introduced by Zhang \cite{Z}.
	
	\begin{definition}[{\cite{Z}}]
		Let $A$ be a graded algebra.
		A set of graded linear automorphisms $\theta=\{\theta_n\}_{n \in \ZZ}$ of $A$ is called
		a \textit{twisting system} on $A$ if
		\begin{center}
			$\theta_n(a\theta_m(b))=\theta_n(a)\theta_{n+m}(b)$
		\end{center}
		for any $l,m,n \in \ZZ$ and $a \in A_m, b \in A_l$.
		The \textit{twisted algebra} of $A$ by a twisting system $\theta$, denoted by $A^{\theta}$,
		is a graded algebra $A$ with a new multiplication $\ast$ defined by
			$a \ast b=a \theta_m(b)$
		for any $a \in A_m, b \in A_l$.
	\end{definition}
	
	Let $A$ be a graded algebra. Any graded algebra automorphism $\theta \in \GrAut_k\,A$ defines a twisting system of $A$ by $\{\theta^n\}_{n \in \ZZ}$.
	The twisted algebra of $A$ by $\{\theta^n\}_{n \in \ZZ}$ is denoted by $A^{\theta}$ instead of $A^{\{\theta^n\}_{n \in \ZZ}}$.
	The following lemma is useful to construct twisting systems.
	
	\begin{lemma}[{\cite[Proposition 2.8]{Z}}]\label{lem.Zhang}
		A graded algebra $A'$ is isomorphic to a twisted algebra of a graded algebra $A$
		if and only if there exists a set of graded linear isomorphisms $\{\phi_n\}_{n \in \ZZ}$ from $A'$ to $A$
		which satisfy
		\begin{center}
			$\phi_{n}(ab)=\phi_{n}(a)\phi_{n+m}(b)$
		\end{center}
		for any $l,m,n \in \ZZ$ and $a \in A'_m, b \in A'_l$.
		If this is the case, then setting $\theta_n=\phi_{n} \circ \phi_0^{-1}$ for any $n \in \ZZ$,
		$\theta=\{\theta_n\}_{n \in \ZZ}$ is a twisting system on $A$ and $\phi_0:A' \to A^{\theta}$ is a
		graded algebra isomorphism.
	\end{lemma}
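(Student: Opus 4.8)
The plan is to unwind the definitions of twisting system and twisted algebra, after which both implications become direct verifications on graded components; the single organizing idea is to recognize that the family $\{\phi_n\}$ should be regarded as $\theta_n$ precomposed with one fixed isomorphism, namely $\phi_0$. For the forward implication, suppose $\psi\colon A' \to A^\theta$ is a graded algebra isomorphism for some twisting system $\theta=\{\theta_n\}_{n\in\ZZ}$ on $A$. Viewing $\psi$ as a graded linear isomorphism $A' \to A$, I would set $\phi_n := \theta_n \circ \psi$ for each $n \in \ZZ$, so that each $\phi_n$ is a graded linear isomorphism $A' \to A$. For $a \in A'_m$ and $b \in A'_l$, multiplicativity of $\psi$ into the twisted product gives $\psi(ab) = \psi(a)\ast\psi(b) = \psi(a)\,\theta_m(\psi(b))$; applying $\theta_n$ and invoking the twisting-system axiom $\theta_n(a'\theta_m(b')) = \theta_n(a')\theta_{n+m}(b')$ with $a'=\psi(a)\in A_m$ and $b'=\psi(b)$ yields exactly $\phi_n(ab)=\phi_n(a)\phi_{n+m}(b)$.

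For the reverse implication, given $\{\phi_n\}$ satisfying the stated relation, I would define $\theta_n := \phi_n \circ \phi_0^{-1} \in \GrAut_k A$. The crucial special case is $n=0$: for $a'\in A'_m$ and $b'\in A'_l$ the relation reads $\phi_0(a'b') = \phi_0(a')\phi_m(b')$, which rewrites as $\phi_0(a')\,\theta_m(\phi_0(b')) = \phi_0(a'b')$. Two things then need checking. First, that $\theta=\{\theta_n\}$ is a twisting system: writing $a=\phi_0(a')$ and $b=\phi_0(b')$, the displayed $n=0$ identity collapses $a\,\theta_m(b)$ to $\phi_0(a'b')$, whence $\theta_n(a\theta_m(b)) = \phi_n(a'b') = \phi_n(a')\phi_{n+m}(b') = \theta_n(a)\theta_{n+m}(b)$ by the general cocycle relation applied to $a'\in A'_m$. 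Second, that $\phi_0\colon A' \to A^\theta$ is a graded algebra isomorphism: it is already a graded linear isomorphism, and the same $n=0$ identity is literally the statement $\phi_0(a'b') = \phi_0(a')\ast\phi_0(b')$. Note also that $\theta_0 = \phi_0\circ\phi_0^{-1} = \id$ holds automatically, so the construction is self-consistent with the conventions.

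The main difficulty is not conceptual but a matter of careful degree bookkeeping: every invocation of the cocycle relation fixes the index shift by the degree of the \emph{left} factor, so one must consistently track that $a\in A'_m$ forces the passage from $\phi_n$ to $\phi_{n+m}$, and that $\phi_0^{-1}$ preserves these degrees when one substitutes $a'=\phi_0^{-1}(a)$. The remaining care points are the degree-$0$ normalizations — that $\phi_0$ sends the identity to the identity and that $\theta_m$ fixes $1$, so that $1_A$ is genuinely the unit of $A^\theta$ and $\phi_0$ is unital — all of which follow by evaluating the relevant relations in degree $0$.
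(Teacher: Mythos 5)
Your proposal is correct. Note that the paper itself gives no proof of this lemma — it is quoted verbatim from Zhang (\cite[Proposition 2.8]{Z}) — and your argument is exactly the standard verification behind Zhang's result: in the forward direction set $\phi_n=\theta_n\circ\psi$ and push the twisted product through the twisting-system axiom; in the reverse direction the $n=0$ instance of the cocycle relation simultaneously yields the twisting-system axiom for $\theta_n=\phi_n\circ\phi_0^{-1}$ and the multiplicativity of $\phi_0\colon A'\to A^{\theta}$, with the unit handled by evaluating at $a=1\in A'_0$ and using surjectivity of $\phi_n$. One cosmetic slip: you write $\theta_n\in\GrAut_k A$, but in this paper $\GrAut_k A$ denotes graded \emph{algebra} automorphisms, whereas the $\theta_n$ are only graded linear automorphisms, which is all that a twisting system requires.
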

	
	Zhang found a necessary and sufficient condition for graded Morita equivalence.
	
	\begin{theorem}[{\cite[Theorem 3.5]{Z}}]\label{thm.Zhang}
		Let $A$ and $A'$ be connected graded algebras with $A_1 \neq 0$.
		Then $A'$ is isomorphic to a twisted algebra of $A$ if and only if
		$\GrMod A$ is equivalent to $\GrMod A'$.
	\end{theorem}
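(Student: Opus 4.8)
The plan is to prove both implications, treating the passage from a twisting system to an equivalence as the routine direction and the reconstruction of a twisting system from an abstract equivalence as the substantial one. For the direction ``$A'$ is a twisted algebra of $A$ $\Rightarrow$ $\GrMod A \simeq \GrMod A'$'', suppose $A' \cong A^{\theta}$ for a twisting system $\theta = \{\theta_n\}_{n \in \ZZ}$. I would construct a functor $\GrMod A \to \GrMod A^{\theta}$ sending a graded module $M = \oplus_n M_n$ to the same graded vector space equipped with the new action $m \ast a := m\,\theta_n(a)$ for $m \in M_n$ and $a \in A_l$, and acting as the identity on underlying maps of morphisms. Associativity of $\ast$ reduces exactly to the twisting-system identity $\theta_n(a)\theta_{n+l}(b) = \theta_n(a\theta_l(b))$, and the inverse twisting system yields a quasi-inverse; hence this is an equivalence. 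This step is a direct verification.

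For the converse, fix an equivalence $F \colon \GrMod A' \to \GrMod A$, and first locate the free modules. Since $A$ is connected, its augmentation ideal is the unique graded maximal ideal, so by a graded Nakayama argument every finitely generated graded projective is a finite direct sum of shifts $A(n)$; in particular $A$ is indecomposable with $\End_{\GrMod A}(A) = A_0 = k$. Now $A'$ is a compact projective generator of $\GrMod A'$ with $\End_{\GrMod A'}(A') = k$, and these properties are preserved by $F$, so $F(A')$ is an indecomposable finitely generated projective generator of $\GrMod A$, necessarily isomorphic to $A(c)$ for some $c$. After composing $F$ with the shift $(-c)$ I may assume $F(A') \cong A$.

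Next I would align the two degree-shift functors. Write $F^{-1}(A(j)) \cong A'(\tau(j))$ with $\tau(0)=0$; using $\Hom_{\GrMod A}(A(i),A(j)) = A_{j-i}$ and $\Hom_{\GrMod A'}(A'(i),A'(j)) = A'_{j-i}$, transport through $F^{-1}$ gives $A'_{\tau(j)-\tau(i)} \cong A_{j-i}$. Taking $j=i+1$ and using $A_1 \neq 0$ together with connectedness of $A'$ forces $\tau(i+1)-\tau(i) \geq 0$, while injectivity of $\tau$ gives $\tau(i+1)\neq \tau(i)$; hence $\tau$ is a strictly increasing bijection of $\ZZ$ fixing $0$, so $\tau = \mathrm{id}$ and $F(A'(n)) \cong A(n)$ for all $n$. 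Then I would reconstruct the twisting maps: choosing isomorphisms $\psi_n \colon F(A'(n)) \xrightarrow{\sim} A(n)$ and identifying $b \in A'_n$ with the morphism $A'(i) \to A'(i+n)$ given by multiplication, I transport it through $F$ and conjugate by the $\psi$'s to obtain, for each $i \in \ZZ$, a graded linear isomorphism $\phi_i \colon A' \to A$. Because $F$ preserves composition and composition of these shifted multiplication maps encodes the products in $A$ and $A'$, functoriality yields the cocycle identity $\phi_n(ab) = \phi_n(a)\phi_{n+m}(b)$ required in Lemma \ref{lem.Zhang} (after matching the shift conventions); that lemma then exhibits $A'$ as a twisted algebra of $A$.

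The hard part will be this converse, and within it two points deserve care: establishing that $F$ carries $A'$ to a shift of $A$ (which rests on the graded Nakayama classification of projectives over a connected graded algebra), and the bookkeeping that converts functoriality of $F$ into Zhang's precise cocycle relation. The hypothesis $A_1 \neq 0$ is used essentially in the alignment step, since it is exactly what guarantees that the degree-one shift is detected by a nonzero Hom-space and thereby pins the reindexing bijection $\tau$ to the identity; without it the shift functors need not be intertwined and the reconstruction breaks down.
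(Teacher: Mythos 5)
The first thing to note is that the paper itself contains no proof of this theorem: it is quoted as \cite[Theorem 3.5]{Z} and used as a black box, so your proposal can only be measured against Zhang's original argument, which it essentially reconstructs (the easy direction by twisting the module action; the converse by showing the equivalence matches shifts of $A'$ with shifts of $A$ and then transporting multiplication maps to produce the data required by Lemma \ref{lem.Zhang}). The outline is sound, and you correctly isolate the role of $A_1\neq 0$: it pins the reindexing bijection $\tau$ to the identity. Two details need repair, however. First, $A'$ is \emph{not} a generator of $\GrMod A'$ (e.g.\ $\Hom_{\GrMod A'}(A',k(-1))\cong k(-1)_0=0$ although $k(-1)\neq 0$; the generator is $\bigoplus_{n\in\ZZ}A'(n)$). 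This error is harmless because you never actually use it: compactness, projectivity and $\End_{\GrMod A'}(A')=A'_0=k$ are categorical properties preserved by any equivalence, and together with your graded-Nakayama observation (finitely generated graded projectives over a connected graded algebra are graded free) they already force $F(A')\cong A(c)$.

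Second, the ``matching of shift conventions'' that you defer is doing real work, and as set up your construction does not give the identity of Lemma \ref{lem.Zhang}. Identifying $b\in A'_l$ with left multiplication $A'(i)\to A'(i+l)$ and conjugating by the $\psi$'s, functoriality yields
$$\phi_i(ab)=\phi_{i+l}(a)\,\phi_i(b)\qquad(a\in A'_m,\ b\in A'_l),$$
where the index attached to $a$ is shifted by the degree of $b$; Lemma \ref{lem.Zhang} requires $\phi_n(ab)=\phi_n(a)\phi_{n+m}(b)$, where the index attached to $b$ is shifted by the degree of $a$. These are mirror conditions, and no reindexing of your family $\{\phi_i\}$ converts one into the other: what your identity exhibits directly is $(A')^{\op}$ as a twisted algebra of $A^{\op}$, not $A'$ as a twisted algebra of $A$. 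The fix is to run the shifts the other way, i.e.\ identify $b\in A'_l$ with $\lambda_b\colon A'(-i-l)\to A'(-i)$ and set $\phi_i(b):=\psi_{-i}\circ F(\lambda_b)\circ\psi_{-i-l}^{-1}\in\Hom_{\GrMod A}(A(-i-l),A(-i))\cong A_l$; the same computation then gives exactly $\phi_i(ab)=\phi_i(a)\phi_{i+m}(b)$, and Lemma \ref{lem.Zhang} applies. (Also, for the $\phi_i$ to be $k$-linear rather than merely additive, the equivalence must be taken $k$-linear, which is the standing convention in \cite{Z}.) With these two repairs your proof is correct.
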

	
	\subsection{Derivation-quotient algebras}
	We now recall the definitions of twisted superpotentials and derivation-quotient algebras
	from \cite{BSW}, \cite{IM} and \cite{MS}.
	
	Let $V$ be a $2$-dimensional vector space.
	We define the linear map $\varphi: V^{\otimes 4} \to V^{\otimes 4}$ by
	$\varphi(v_1 \otimes v_2 \otimes v_3 \otimes v_4):=v_4 \otimes v_1 \otimes v_2 \otimes v_3$.
	We denote by $\GL(V)$ the general linear group of $V$.
	
	\begin{definition}
		[{\cite[Introduction]{BSW}, \cite[Definition 2.5]{MS}, \cite[Definition 2.9]{IM}}]
		\noindent
		\begin{enumerate}
			\item[{\rm (1)}] We say that $w \in V^{\otimes 4}$ is a {\it superpotential}
			if $\varphi(w)=w$.
			\item[{\rm (2)}] We say that $w \in V^{\otimes 4}$ is a {\it twisted superpotential}
			if there exists $\theta \in \GL(V)$ such that
			$(\theta \otimes \id^{\otimes 3})(\varphi(w))=w$.
			\item[{\rm (3)}] For a superpotential $w \in V^{\otimes 4}$ and $\theta \in \GL(V)$,
			\begin{center}
				$w^{\theta}:=(\theta^3 \otimes \theta^2 \otimes \theta \otimes \id)(w)$
			\end{center}
			is called a {\it Mori-Smith twist (MS twist)} of $w$ by $\theta$.
		\end{enumerate}
	\end{definition}
	
	For $w \in V^{\otimes 4}$, we define
	\begin{center}
		$\Aut(w):=\{
		\theta \in \GL(V) \mid (\theta^{\otimes 4})(w)=\l w,\ \exists \l \in k \setminus \{0\}
		\}$.
	\end{center}
	By \cite[Lemma 3.1]{MS}, $\Aut(w)$ is a subset of $\GrAut_k \cD(w)$.
	For $\theta \in \Aut(w)$,
	we also denote by $\theta$ the graded algebra automorphism of $A$ induced by $\theta$.
	
	\begin{lemma}\label{lem.MStwist}
		Let $w \in V^{\otimes 4}$ be a superpotential
		and $\theta \in \Aut(w)$.
		Then the MS twist $w^{\theta}$ of $w$ by $\theta$ is a twisted superpotential.
	\end{lemma}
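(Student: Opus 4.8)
The plan is to produce an explicit element $\psi \in \GL(V)$ witnessing that $w^{\theta}$ is a twisted superpotential, i.e. one satisfying $(\psi \otimes \id^{\otimes 3})(\varphi(w^{\theta})) = w^{\theta}$. Since $\theta \in \Aut(w)$, there is a scalar $\lambda \in k \setminus \{0\}$ with $(\theta^{\otimes 4})(w) = \lambda w$, and since $w$ is a superpotential we have $\varphi(w) = w$. My guess for the required automorphism is $\psi = \lambda^{-1}\theta^{4}$, and the whole proof reduces to verifying that this choice works by a direct computation that feeds in these two defining relations at the right moment.

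The first key step is a commutation identity between the cyclic shift $\varphi$ and the twisting operator $T := \theta^{3}\otimes\theta^{2}\otimes\theta\otimes\id$ used to define the MS twist. Checking on a pure tensor $v_1\otimes v_2 \otimes v_3 \otimes v_4$, one reads off
\[
\varphi \circ (\theta^{3}\otimes\theta^{2}\otimes\theta\otimes\id) = (\id\otimes\theta^{3}\otimes\theta^{2}\otimes\theta)\circ\varphi .
\]
Applying this to $w^{\theta} = T(w)$ and then invoking $\varphi(w)=w$ gives $\varphi(w^{\theta}) = (\id \otimes \theta^3 \otimes \theta^2 \otimes \theta)(w)$. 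Composing with $\psi \otimes \id^{\otimes 3}$ replaces the left-hand factor by $\psi = \lambda^{-1}\theta^{4}$, so that $(\psi\otimes\id^{\otimes 3})\varphi(w^{\theta}) = \lambda^{-1}(\theta^{4}\otimes\theta^{3}\otimes\theta^{2}\otimes\theta)(w)$.

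The second key step is to recognize the resulting operator as $\theta^{4}\otimes\theta^{3}\otimes\theta^{2}\otimes\theta = T\circ\theta^{\otimes 4}$; then the $\Aut(w)$ relation collapses it via $(T\circ\theta^{\otimes 4})(w) = T(\lambda w)=\lambda\,w^{\theta}$, and the prefactor $\lambda^{-1}$ cancels $\lambda$ exactly, yielding $(\psi\otimes\id^{\otimes 3})\varphi(w^{\theta}) = w^{\theta}$, as required; note $\psi = \lambda^{-1}\theta^{4}\in\GL(V)$ because $\theta\in\GL(V)$ and $\lambda\neq 0$. I do not expect a genuine conceptual obstacle: the entire content is the interplay of the two relations $\varphi(w)=w$ and $\theta^{\otimes 4}(w)=\lambda w$. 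The only place to be careful — and what I would treat as the crux of the bookkeeping — is tracking the powers of $\theta$ as they are pushed through the shift $\varphi$ and correctly absorbing the scalar $\lambda$ into $\psi$; getting either the exponent $4$ or the factor $\lambda^{-1}$ wrong introduces a spurious scalar that breaks the identity.
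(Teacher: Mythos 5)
Your proof is correct and is essentially identical to the paper's: the paper also takes the witness $\theta' = \lambda^{-1}\theta^{4}$, uses the same commutation of $\varphi$ past $\theta^{3}\otimes\theta^{2}\otimes\theta\otimes\id$, and absorbs the scalar $\lambda$ in exactly the same way. Your write-up merely makes explicit the bookkeeping (the factorization $\theta^{4}\otimes\theta^{3}\otimes\theta^{2}\otimes\theta = (\theta^{3}\otimes\theta^{2}\otimes\theta\otimes\id)\circ\theta^{\otimes 4}$) that the paper's final displayed equality leaves implicit.
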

	
	\begin{proof}
		Let $w \in V^{\otimes 4}$ and $\theta \in \Aut(w)$.
		By definition, there exists $\l \in k \setminus \{0\}$ such that
		$(\theta^{\otimes 4})(w)=\l w$.
		If $w$ is a superpotential, then we set $\theta':=\l^{-1}\theta^4$.
		Since $w$ is a superpotential, it follows that
		\begin{align*}
			(\theta' \otimes \id^{\otimes 3})(\varphi(w^{\theta}))
			&=(\theta' \otimes \id^{\otimes 3})
			(\id \otimes \theta^3 \otimes \theta^2 \otimes \theta)(\varphi(w)) \\
			&=(\theta' \otimes \theta^3 \otimes \theta^2 \otimes \theta)(w)
			=w^{\theta},
		\end{align*}
		so $w^{\theta}$ is a twisted superpotential.
	\end{proof}

	Fix a basis $\{x,y\}$ for $V$.
	For $w \in V^{\otimes 4}$, there exist unique $w_x, w_y \in V^{\otimes 3}$ such that
	$w=x \otimes w_x+y \otimes w_y$.
	Then the {\it left partial derivative} of $w$ with respect to $x$ (resp. $y$) is
	$\partial_x(w):=w_x$ (resp. $\partial_y(w):=w_y$), and
	the {\it derivation-quoitent algebra} of $w$ is
	\begin{center}
		$\cD(w):=k\langle x,y \rangle/(\partial_x(w), \partial_y(w))$.
	\end{center}
	
	\begin{remark}
		By Dubois-Violette \cite{D}, Bocklandt {\it et al} \cite{BSW} and Mori-Smith \cite{MS},
		for any $3$-dimensional cubic AS-regular algebra $A$, there exists
		a unique twisted superpotential $w$ up to nonzero scalar multiples such that
		$A$ is a derivation-quotient algebra $\cD(w)$ of $w$.
		This result means that
		a twisted superpotential is a useful tool to classify $3$-dimensional cubic AS-regular algebra.
	\end{remark}

	
	\begin{lemma}[{\cite[Proposition 5.2]{MS}}]\label{lem.dq}
		Let $w \in V^{\otimes 4}$ and $\theta \in \Aut(w)$. Then
		$\cD(w^{\theta}) \cong \cD(w)^{\theta}$.
	\end{lemma}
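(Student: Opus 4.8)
The plan is to compute the space of defining relations of each side as an explicit subspace of $V^{\otimes 3}$ and then to exhibit a single linear change of variables in $\GL(V)$ carrying one to the other. Write $R:=k\,\partial_x(w)+k\,\partial_y(w)\subseteq V^{\otimes 3}$, so that $\cD(w)=k\langle x,y\rangle/(R)$. First I would record the one structural fact that makes everything work: since $\theta\in\Aut(w)$ there is $\lambda\in k\setminus\{0\}$ with $(\theta^{\otimes 4})(w)=\lambda w$, and substituting $w=x\otimes\partial_x(w)+y\otimes\partial_y(w)$ and comparing the two coefficients in the first tensor slot shows that $\theta^{\otimes 3}$ sends each of $\partial_x(w),\partial_y(w)$ into $R$; as $\theta^{\otimes 3}$ is invertible this yields $\theta^{\otimes 3}(R)=R$. (This is precisely the inclusion $\Aut(w)\subseteq\GrAut_k\cD(w)$ recalled in the text.)

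Next I would compute the relations of $\cD(w^{\theta})$. Substituting $w=x\otimes\partial_x(w)+y\otimes\partial_y(w)$ into $w^{\theta}=(\theta^3\otimes\theta^2\otimes\theta\otimes\id)(w)$ and expanding $\theta^3(x),\theta^3(y)$ in the basis $\{x,y\}$, I can read off the left partial derivatives: each of $\partial_x(w^{\theta}),\partial_y(w^{\theta})$ equals $(\theta^2\otimes\theta\otimes\id)$ applied to a linear combination of $\partial_x(w),\partial_y(w)$ whose coefficient matrix is exactly the matrix of $\theta^3$ on $V$. Since that matrix is invertible, the two combinations span the same subspace as $\partial_x(w),\partial_y(w)$, namely $R$, whence
\[
 k\,\partial_x(w^{\theta})+k\,\partial_y(w^{\theta})\;=\;(\theta^2\otimes\theta\otimes\id)(R)=:S,
\]
so that $\cD(w^{\theta})=k\langle x,y\rangle/(S)$.

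Then I would identify the relations of the twisted algebra $\cD(w)^{\theta}$. The twisting system is $\{\theta^{n}\}_{n\in\ZZ}$, and on the free algebra one computes $v_1\ast\cdots\ast v_n=v_1\otimes\theta(v_2)\otimes\cdots\otimes\theta^{\,n-1}(v_n)$; by \lemref{lem.Zhang} the identity on $V$ thus extends to a graded algebra isomorphism $\mu\colon k\langle x,y\rangle\xrightarrow{\ \sim\ }k\langle x,y\rangle^{\theta}$ with $\mu_n=\id\otimes\theta\otimes\cdots\otimes\theta^{\,n-1}$. The quotient map $k\langle x,y\rangle\to\cD(w)$ is $\theta$-equivariant, hence also a homomorphism of twisted algebras, and its kernel is, as a graded subspace, the ideal $(R)$. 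The invariance $\theta^{\otimes 3}(R)=R$ from the first step is essential here: it forces the two-sided ideal generated by $R$ in the twisted free algebra to coincide degree by degree with the ordinary one, so pulling back along $\mu$ gives an ideal again generated in degree $3$, by $\mu_3^{-1}(R)=(\id\otimes\theta^{-1}\otimes\theta^{-2})(R)=:R'$. Consequently $\cD(w)^{\theta}\cong k\langle x,y\rangle/(R')$.

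Finally I would match the two relation spaces: since $(\theta^2)^{\otimes 3}(R')=(\theta^2\otimes\theta^2\otimes\theta^2)(\id\otimes\theta^{-1}\otimes\theta^{-2})(R)=(\theta^2\otimes\theta\otimes\id)(R)=S$, the linear automorphism $\theta^2\in\GL(V)$ extends to a graded algebra automorphism of $k\langle x,y\rangle$ carrying $(R')$ onto $(S)$, and descends to $\cD(w)^{\theta}\cong k\langle x,y\rangle/(R')\cong k\langle x,y\rangle/(S)=\cD(w^{\theta})$. I expect the main obstacle to be the third step, namely justifying that the defining ideal of $\cD(w)^{\theta}$ is still generated in degree $3$ with relation space $R'$; this is exactly where $\theta^{\otimes 3}(R)=R$ enters. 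If one prefers to bypass the ideal bookkeeping, \thmref{thm.Zhang} gives $\GrMod\cD(w)\simeq\GrMod\cD(w)^{\theta}$, which forces $\cD(w)^{\theta}$ to be again a cubic algebra on two generators with a relation space of the same dimension, after which only the identification of that space as $R'$ remains.
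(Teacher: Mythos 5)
Your proof is correct, but there is nothing in the paper to compare it against: Lemma~\ref{lem.dq} is quoted from \cite[Proposition 5.2]{MS} as a black box, with no in-paper argument, so what you have written is a self-contained replacement for that citation. Every step checks out. Comparing the coefficients of $x$ and $y$ in the first tensor slot of $(\theta^{\otimes 4})(w)=\lambda w$ does give $\theta^{\otimes 3}(R)=R$; expanding $w^{\theta}=(\theta^{3}\otimes\theta^{2}\otimes\theta\otimes\id)(w)$ shows the relation space of $\cD(w^{\theta})$ is $S=(\theta^{2}\otimes\theta\otimes\id)(R)$, since the coefficient matrix of $\theta^{3}$ is invertible; your map $\mu$ with $\mu_{n}=\id\otimes\theta\otimes\cdots\otimes\theta^{n-1}$ really is a graded algebra isomorphism $T(V)\to T(V)^{\theta}$ (the appeal to Lemma~\ref{lem.Zhang} works, taking $\phi_{n}|_{V^{\otimes l}}=\theta^{n}\otimes\theta^{n+1}\otimes\cdots\otimes\theta^{n+l-1}$, and can also be checked by hand); and $(\theta^{2})^{\otimes 3}(R')=S$ closes the loop. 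The one place where I would insist on an extra line is your third step: the assertion that $\mu^{-1}((R))$ is the ideal generated by $R'=(\id\otimes\theta^{-1}\otimes\theta^{-2})(R)$ follows from the identity $\mu_{n}^{-1}(V^{\otimes i}\otimes R\otimes V^{\otimes j})=V^{\otimes i}\otimes(\theta^{-i}\otimes\theta^{-i-1}\otimes\theta^{-i-2})(R)\otimes V^{\otimes j}=V^{\otimes i}\otimes R'\otimes V^{\otimes j}$, where the last equality uses $(\theta^{-i})^{\otimes 3}(R)=R$ --- exactly the invariance from your first step, so your diagnosis of where that hypothesis enters is right, but it is this displayed identity, not the equality of the twisted and untwisted ideals, that actually delivers ``generated in degree $3$ by $\mu_{3}^{-1}(R)$.'' (Your alternative ending via Theorem~\ref{thm.Zhang} would still leave that same identification to be done, so the main line of argument is the one to keep.) What your route buys: it makes the lemma independent of \cite{MS}, it isolates the only property of $\theta\in\Aut(w)$ that is ever used, namely $\theta^{\otimes 3}(R)=R$, and nothing in it is special to $\dim V=2$ or to degree-$4$ potentials --- the same bookkeeping proves the analogous statement for $w\in V^{\otimes(m+1)}$ in any number of variables.
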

	
	\begin{lemma}\label{lem.tsp}
		Let $w \in V^{\otimes 4}$ be a twisted superpotential. 
		If $\partial_x(w), \partial_y(w)$ are linearly independent,
		then the derivation-quotient algebra $\cD(w)$ is standard.
	\end{lemma}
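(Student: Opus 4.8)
The plan is to exploit the freedom in the definition of \emph{standard}: we get to choose both a basis of $V$ and a basis of $R$. I would take the given basis $\{x,y\}$ for $V$ and the pair $\{f_1,f_2\}:=\{\partial_x(w),\partial_y(w)\}$ for $R$. The hypothesis that $\partial_x(w),\partial_y(w)$ are linearly independent is used precisely here: it guarantees that $\{f_1,f_2\}$ is genuinely a basis of $R$, so that $\dim R=2$ and the matrix ${\bf M}$ and the elements ${\bf g}$ are defined. The crux of the argument is then to identify the resulting elements $g_1,g_2$ with the ``right-hand'' partial derivatives of $w$, after which standardness falls out of the twisted superpotential identity via a single invertible matrix.

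Concretely, I would expand $w=\sum_{i,j,k,l}c_{ijkl}\,e_i\otimes e_j\otimes e_k\otimes e_l$ with $e_1=x$, $e_2=y$. The left partial derivatives are $f_i=\partial_{e_i}(w)=\sum_{j,k,l}c_{ijkl}\,e_j\otimes e_k\otimes e_l$, and factoring out the rightmost tensor slot gives $m_{il}=\sum_{j,k}c_{ijkl}\,e_j\otimes e_k$, so ${\bf M}$ is completely explicit in the coefficients $c_{ijkl}$. A direct computation of $g_i=x\,m_{1i}+y\,m_{2i}=\sum_p e_p\otimes m_{pi}$ then yields $g_i=\sum_{p,j,k}c_{pjki}\,e_p\otimes e_j\otimes e_k$, which is exactly the element obtained by factoring the rightmost slot out of $w$. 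Equivalently, since $\varphi$ cyclically moves the last tensor factor to the front, one checks $\varphi(w)=x\otimes g_1+y\otimes g_2$, so $g_1,g_2$ are the left partial derivatives of $\varphi(w)$.

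Next I would bring in the twisted superpotential condition. Writing $\theta(x)=ax+by$ and $\theta(y)=cx+dy$, and applying $\theta\otimes\id^{\otimes 3}$ to $\varphi(w)=x\otimes g_1+y\otimes g_2$, the defining identity $(\theta\otimes\id^{\otimes 3})(\varphi(w))=w$ becomes
\[
x\otimes(a g_1+c g_2)+y\otimes(b g_1+d g_2)=x\otimes f_1+y\otimes f_2 .
\]
Comparing the coefficients of $x$ and $y$ gives $f_1=a g_1+c g_2$ and $f_2=b g_1+d g_2$, that is, ${\bf f}=N{\bf g}^{t}$ where $N=\begin{pmatrix}a&c\\ b&d\end{pmatrix}$ is the matrix of $\theta$. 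Since $\theta\in\GL(V)$, the matrix $N$ is invertible, so setting $Q:=N^{-1}\in\GL_2(k)$ gives ${\bf g}^{t}=Q{\bf f}$, which is exactly the standard condition; hence $\cD(w)$ is standard.

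I expect the only real obstacle to be bookkeeping rather than any conceptual difficulty: one must keep straight the difference between the left factorization (which produces $f_i$ and the $m_{ij}$) and the right factorization (which produces $g_i$), and match the tensor index conventions carefully enough to see that $g_i$ really is the right partial derivative of $w$, or equivalently the left partial derivative of $\varphi(w)$. Once that identification is in place, everything is driven by the single invertible operator $\theta$ supplied by the twisted superpotential hypothesis, and the rest is elementary linear algebra.
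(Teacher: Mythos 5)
Your proof is correct and takes essentially the same route as the paper: factor $w$ both as $x\otimes f_1+y\otimes f_2$ and as $g_1\otimes x+g_2\otimes y$, observe that $\varphi(w)=x\otimes g_1+y\otimes g_2$, and use the twisted-superpotential identity $(\theta\otimes\id^{\otimes 3})(\varphi(w))=w$ to obtain ${\bf g}^{t}=Q{\bf f}$ with $Q$ the inverse of the matrix of $\theta$, invertibility being all that standardness requires. The paper's proof simply asserts the resulting identity (written there as ${\bf g}^{t}=(\theta^{-1})^{t}{\bf f}$, the transpose reflecting only a matrix convention), whereas you spell out the coefficient computation; the substance is identical.
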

	
	\begin{proof}
		Let $w \in V^{\otimes 4}$ be a twisted superpotential,
		that is, there exists $\theta \in \GL_2(k)$ such that
		$(\theta \otimes \id^{\otimes 3})(\varphi(w))=w$.
		Assume that $\partial_x(w), \partial_y(w)$ are linearly independent,
		so $\dim(\partial_x(w),\partial_y(w))=2$.  
		Here, there exist unique $w_x, w_y, w'_x, w'_y \in V^{\otimes 3}$ such that
		\begin{center}
			$w=x \otimes w_x+y \otimes w_y=w'_x \otimes x+w'_y \otimes y$.
		\end{center}
		If we write ${\bf f}=(w_x,w_y)^{t}$ and ${\bf g}=(w'_x,w'_y)$, then we have that ${\bf g}^{t}=(\theta^{-1})^{t}{\bf f}$.
		Therefore, $\cD(w)$ is standard.
	\end{proof}
	\section{Geometric algebras}\label{sec.GA}
	In this section,
	we introduce the notion of geometric algebra
	and show useful results to classify geometric algebras up to graded algebra isomorphism and graded Morita equivalence.
	
	Let $V$ be a finite dimensional vector space over $k$.
	We denote by $\PP(V)$ the projective space associated to $V$.
	For $v \in V \setminus \{0\}$, we denote by $\overline{v}$ the element of the projective space $\PP(V)$.
	We denote by $V^{\ast}$ the dual space of $V$.
	For $\phi \in \GL(V)$,
	we denote by $\phi^{\ast}$ the dual map of $\phi$ from $V^{\ast}$ to $V^{\ast}$.
	We define a map $\overline{\phi^{\ast}}:\PP(V^{\ast}) \to \PP(V^{\ast})$ by
	\begin{center}
		$\overline{\phi^{\ast}}\left(\overline{\xi}\right):=\overline{\phi^{\ast}(\xi)}$.
	\end{center}
	It is easy to check the following lemma.
	\begin{lemma}\label{lem.zero}
		Let $\phi_1,\phi_2,\phi_3 \in \GL(V)$, $f \in V^{\otimes 3}$ and $p_1,p_2,p_3\in \PP(V^{\ast})$.
		Then $((\phi_1 \otimes \phi_2 \otimes \phi_3)(f))(p_1,p_2,p_3)=0$ if and only if
		\begin{center}
			$f(\overline{\phi_1^{\ast}}(p_1),\overline{\phi_2^{\ast}}(p_2),\overline{\phi_3^{\ast}}(p_3))=0$.
		\end{center}
	\end{lemma}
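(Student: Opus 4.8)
The plan is to prove the sharper statement that the two scalars appearing in the lemma are literally equal for every choice of representatives, after which the asserted equivalence of their vanishing is immediate. First I would make explicit the evaluation pairing being used: an element $f \in V^{\otimes 3}$ is read as a trilinear form on $V^{\ast} \times V^{\ast} \times V^{\ast}$ by setting, on a simple tensor $f = v_1 \otimes v_2 \otimes v_3$ and $\xi_1,\xi_2,\xi_3 \in V^{\ast}$, that $f(\overline{\xi_1},\overline{\xi_2},\overline{\xi_3}) = \xi_1(v_1)\,\xi_2(v_2)\,\xi_3(v_3)$, and extending linearly in $f$. Because this expression is linear in each argument $\xi_i$, replacing $\xi_i$ by a nonzero scalar multiple only rescales the value by a nonzero factor; hence its vanishing does not depend on the chosen representatives, and the condition $f(p_1,p_2,p_3)=0$ is well-defined for $p_i \in \PP(V^{\ast})$.

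By linearity in $f$ it suffices to verify the identity on a simple tensor $f = v_1 \otimes v_2 \otimes v_3$. Writing $p_i = \overline{\xi_i}$ and using $(\phi_1 \otimes \phi_2 \otimes \phi_3)(f) = \phi_1(v_1) \otimes \phi_2(v_2) \otimes \phi_3(v_3)$, I would expand the left-hand evaluation as $\xi_1(\phi_1(v_1))\,\xi_2(\phi_2(v_2))\,\xi_3(\phi_3(v_3))$. The single ingredient to invoke is the definition of the dual map, namely $(\phi_i^{\ast}(\xi_i))(v) = \xi_i(\phi_i(v))$; substituting this factor by factor rewrites the product as $(\phi_1^{\ast}(\xi_1))(v_1)\,(\phi_2^{\ast}(\xi_2))(v_2)\,(\phi_3^{\ast}(\xi_3))(v_3)$, which is exactly $f(\overline{\phi_1^{\ast}(\xi_1)},\overline{\phi_2^{\ast}(\xi_2)},\overline{\phi_3^{\ast}(\xi_3)})$. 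Since $\overline{\phi_i^{\ast}}(p_i) = \overline{\phi_i^{\ast}(\xi_i)}$ by the very definition of the induced map on $\PP(V^{\ast})$, this is the desired right-hand side.

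Combining the two steps yields the pointwise equality $((\phi_1 \otimes \phi_2 \otimes \phi_3)(f))(p_1,p_2,p_3) = f(\overline{\phi_1^{\ast}}(p_1),\overline{\phi_2^{\ast}}(p_2),\overline{\phi_3^{\ast}}(p_3))$ for every $f$ and every triple of points, and the stated equivalence of the two vanishing conditions follows at once. There is no genuine obstacle here: the entire content is the bookkeeping of transporting each $\phi_i$ across the pairing via its transpose, and the only point deserving a word of care is confirming that the projective evaluation is well-posed, which the multilinearity established in the first step guarantees.
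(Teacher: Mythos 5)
Your proof is correct: the paper offers no argument for this lemma (it merely remarks that it is ``easy to check''), and your verification --- reduction by linearity to simple tensors, the defining identity $(\phi_i^{\ast}(\xi))(v)=\xi(\phi_i(v))$ of the dual map, and the observation that vanishing on $\PP(V^{\ast})$ is independent of the chosen representatives --- is exactly the routine computation the authors intend. Nothing further is needed.
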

	
	For a subspace $R$ of $V^{\otimes 3}$, we define the zero set of $R$ by
	\begin{center}
		$\cV(R):=\{
		(p_1,p_2,p_3) \in \PP(V^{\ast})^{\times 3} \mid f(p_1,p_2,p_3)=0\,\,\, \forall f \in R
		\}$.
	\end{center}
	
	\begin{lemma}\label{lem.zeroR}
		Let $R$ and $R'$ be subspaces of $V^{\otimes 3}$ and $\phi_1,\phi_2,\phi_3 \in \GL(V)$.
		If $(\phi_1 \otimes \phi_2 \otimes \phi_3)(R)=R'$, then
		$\cV(R)=(\overline{\phi_1^{\ast}} \times \overline{\phi_2^{\ast}} \times \overline{\phi_3^{\ast}})(\cV(R'))$.
	\end{lemma}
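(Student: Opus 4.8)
The plan is to deduce the set equality from Lemma~\ref{lem.zero} by a double-inclusion argument, exploiting that the hypothesis $(\phi_1\otimes\phi_2\otimes\phi_3)(R)=R'$ makes the two zero loci correspond under the product map $\overline{\phi_1^{\ast}}\times\overline{\phi_2^{\ast}}\times\overline{\phi_3^{\ast}}$. I would write $\Phi:=\phi_1\otimes\phi_2\otimes\phi_3$, which is a linear automorphism of $V^{\otimes 3}$ since each $\phi_i$ lies in $\GL(V)$, and $\Psi:=\overline{\phi_1^{\ast}}\times\overline{\phi_2^{\ast}}\times\overline{\phi_3^{\ast}}$. Because $\phi_i\in\GL(V)$ forces $\phi_i^{\ast}\in\GL(V^{\ast})$, each $\overline{\phi_i^{\ast}}$ is a well-defined bijection of $\PP(V^{\ast})$, so $\Psi$ is a bijection of $\PP(V^{\ast})^{\times 3}$ with inverse $\overline{(\phi_1^{\ast})^{-1}}\times\overline{(\phi_2^{\ast})^{-1}}\times\overline{(\phi_3^{\ast})^{-1}}$. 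The key translation device is Lemma~\ref{lem.zero}: for $f\in V^{\otimes 3}$ and a triple $p=(p_1,p_2,p_3)$, the relation $(\Phi(f))(p)=0$ holds if and only if $f(\Psi(p))=0$.

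For the inclusion $\Psi(\cV(R'))\subseteq\cV(R)$, I would take $p\in\cV(R')$ and show $\Psi(p)\in\cV(R)$. Every $f\in R$ satisfies $\Phi(f)\in\Phi(R)=R'$, hence $(\Phi(f))(p)=0$ because $p\in\cV(R')$; Lemma~\ref{lem.zero} then yields $f(\Psi(p))=0$. As $f\in R$ was arbitrary, $\Psi(p)\in\cV(R)$.

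For the reverse inclusion $\cV(R)\subseteq\Psi(\cV(R'))$, I would invoke the surjectivity of $\Psi$: given $q\in\cV(R)$, put $p:=\Psi^{-1}(q)$. For any $f'\in R'$, write $f'=\Phi(f)$ with $f\in R$, which is possible since $\Phi(R)=R'$; then $f'(p)=(\Phi(f))(\Psi^{-1}(q))$, and Lemma~\ref{lem.zero} identifies its vanishing with that of $f(\Psi(\Psi^{-1}(q)))=f(q)$, which is zero because $q\in\cV(R)$ and $f\in R$. Hence $p\in\cV(R')$ and $q=\Psi(p)\in\Psi(\cV(R'))$. Combining the two inclusions gives $\cV(R)=\Psi(\cV(R'))$.

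I expect the argument to be essentially bookkeeping, with the only points requiring genuine care being the following: first, that $\Psi$ is truly a bijection, which is needed to produce the preimage $p=\Psi^{-1}(q)$ in the reverse inclusion and is guaranteed by $\phi_i^{\ast}\in\GL(V^{\ast})$; and second, that the hypothesis is used in \emph{both} directions through the equality $\Phi(R)=R'$, namely that every element of $R'$ is $\Phi$ of some element of $R$ and conversely. Granting Lemma~\ref{lem.zero}, no computation beyond these observations is required.
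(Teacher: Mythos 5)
Your proof is correct and takes essentially the same route as the paper's: the paper's argument is a single chain of equivalences built from Lemma~\ref{lem.zero}, the hypothesis $(\phi_1\otimes\phi_2\otimes\phi_3)(R)=R'$, and the bijectivity of $\overline{\phi_1^{\ast}}\times\overline{\phi_2^{\ast}}\times\overline{\phi_3^{\ast}}$, which is exactly your two inclusions combined. Unwinding it as a double-inclusion argument is only a presentational difference, and there are no gaps.
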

	
	\begin{proof}
		Assume that $(\phi_1 \otimes \phi_2 \otimes \phi_3)(R)=R'$.
		By Lemma \ref{lem.zero}, it holds that
		\begin{align*}
			&(p_1,p_2,p_3) \in \cV(R) \\
			\Longleftrightarrow\,\,\, &f(p_1,p_2,p_3)=0 \quad \forall f \in R \\
			\Longleftrightarrow\,\,\, &((\phi_1 \otimes \phi_2 \otimes \phi_3)(f))((\overline{\phi_1^{\ast}})^{-1}(p_1),(\overline{\phi_2^{\ast}})^{-1}(p_2),(\overline{\phi_3^{\ast}})^{-1}(p_3))=0 \quad \forall f \in R \\
			\Longleftrightarrow\,\,\, &((\overline{\phi_1^{\ast}})^{-1}(p_1),(\overline{\phi_2^{\ast}})^{-1}(p_2),(\overline{\phi_3^{\ast}})^{-1}(p_3)) \in \cV(R') \\
			\Longleftrightarrow\,\,\, &(p_1,p_2,p_3) \in (\overline{\phi_1^{\ast}} \times \overline{\phi_2^{\ast}} \times \overline{\phi_3^{\ast}})(\cV(R')).
		\end{align*}
		Hence, the statement holds. 
		%
	\end{proof}
	
	Let $V$ be a finite-dimensional vector space.
	Let $E \subset \PP(V^{\ast}) \times \PP(V^{\ast})$ be a projective variety and
	$\pi_i:\PP(V^{\ast}) \times \PP(V^{\ast}) \to \PP(V^{\ast})$ $i$-th projections where $i=1,2$.
	We set the following notation:
	\begin{center}
		$\Aut_k^G E:=\{\sigma \in \Aut_k E \mid (\pi_1 \circ \sigma)(p_1,p_2)=\pi_2(p_1,p_2) \,\,\, \forall (p_1,p_2) \in E \}$.
	\end{center}
	We say that a pair $(E,\sigma)$ is a {\it geometric pair}
	if $\sigma \in \Aut_k^G E$.
	
	Let $A=T(V)/(R)$ be a cubic algebra where $V$ is a finite-dimensional vector space
	and $R$ is a subspace of $V^{\otimes 3}$.
	We denote by $\PP_i$ the $i$-th factor of the product $\PP(V^{\ast})^{\times 3}$ and let $\pi_{ij}$ denote
	its projection to the product $\PP_i \times \PP_j$.
	\begin{definition}[{cf. \cite[Definition 4.3]{M}}]\label{defin.GA}
		Let $A=T(V)/(R)$ be a cubic algebra.
		\begin{enumerate}
			\item
			We say that $A$ satisfies (G1) if
			$\pi_{12}$ and $\pi_{23}$ send $\cV(R)$ isomorphically onto their images $\pi_{12}(\cV(R))$ and $\pi_{23}(\cV(R))$ respectively, and
			\begin{center}
				$\pi_{12}(\cV(R))=\pi_{23}(\cV(R))$.
			\end{center}
			In this case, we write $\cP(A)=(E,\sigma)$ where $E:=\pi_{12}(\cV(R))$ and $\sigma:=\pi_{23}{\pi_{12}}^{-1} \in \Aut_k^G E$.
			
			\item We say that $A$ satisfies (G2) if there exists a geometric pair $(E,\sigma)$ such that
			\begin{center}
				$R=\{
				f \in V^{\otimes 3} \mid f(p_1,p_2,(\pi_2 \circ \sigma)(p_1,p_2))=0 \,\,\, \forall (p_1,p_2) \in E
				\}$.
			\end{center}
			In this case, we write $A=\cA(E,\sigma)$.
			
			\item We say that $A$ is a {\it geometric algebra} if $A$ satisfies (G1) and (G2) with $A=\cA(\cP(A))$.
		\end{enumerate}
	\end{definition}

	
	
	Let $A=T(V)/(R)$ be a cubic algebra. If a graded algebra $A'=T(V')/(R')$ is graded Morita equivalent to $A$,
	then it follows from Theorem \ref{thm.Zhang} that $A'$ is isomorphic to a twisted algebra $A^{\theta}$ of $A$ for some twisting system $\theta$.
	As is explained in \cite[page 305--306]{Z}, $A^{\theta}$ is also a cubic algebra.
	Moreover, $A$ and $A^{\theta}$ have the same set of generators, so we may assume that $V=V'$.
	
	\begin{theorem}\label{thm.GM}
		Let $A=T(V)/(R)$ and $A'=T(V)/(R')$ be cubic algebras such that $\GrMod A \cong \GrMod A'$.
		Then the following statements hold:
		\begin{enumerate}
			\item[{\rm (1)}] $A$ satisfies {\rm (G1)} if and only if $A'$ satisfies {\rm (G1)}.
			\item[{\rm (2)}] $A$ satisfies {\rm (G2)} if and only if $A'$ satisfies {\rm (G2)}.
			\item[{\rm (3)}] $A$ is geometric if and only if $A'$ is geometric.
		\end{enumerate}
	\end{theorem}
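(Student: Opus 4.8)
The plan is to reduce the statement to a single Zhang twist and then transport all the geometric data by a product automorphism of $\PP(V^{\ast})^{\times 3}$. By \thmref{thm.Zhang} the equivalence $\GrMod A\cong\GrMod A'$ means $A'\cong A^{\theta}$ for some twisting system $\theta=\{\theta_n\}_{n\in\ZZ}$ on $A$. A graded-algebra isomorphism of cubic algebras is induced by some $\psi\in\GL(V)$, and \lemref{lem.zeroR} applied with $\phi_1=\phi_2=\phi_3=\psi$ shows that such an isomorphism transports (G1) and (G2) through the diagonal automorphism $\overline{\psi^{\ast}}\times\overline{\psi^{\ast}}\times\overline{\psi^{\ast}}$; so I may assume $A'=A^{\theta}$ and that $R'$ is the relation space of $A^{\theta}$. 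Writing $t_i:=\theta_i|_V\in\GL(V)$ and $s_i:=\overline{t_i^{\ast}}$, the twisted triple product $x\ast y\ast z=x\,\theta_1(y)\,\theta_2(z)$ shows that $f\in R'$ iff $(\id\otimes t_1\otimes t_2)(f)\in R$, i.e. $(\id\otimes t_1\otimes t_2)(R')=R$. Hence \lemref{lem.zeroR} gives
\[
\cV(R')=(\id\times s_1\times s_2)(\cV(R)).
\]

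The crux is a compatibility relation read off from the twisting-system axiom $\theta_n(a\theta_m(b))=\theta_n(a)\theta_{n+m}(b)$ in degree $3$. Applying it first on $A_2=V^{\otimes 2}$ and then on $A_3$ shows that the degree-$3$ component $\theta_1|_{A_3}$ is induced by $t_1\otimes t_2t_1^{-1}\otimes t_3t_2^{-1}$ on $V^{\otimes 3}$. Since $\theta_1$ is a well-defined graded linear automorphism of $A$, this map must preserve $R$, so \lemref{lem.zeroR} forces $\cV(R)$ to be invariant under $s_1\times(s_1^{-1}s_2)\times(s_2^{-1}s_3)$. Projecting by $\pi_{12}$, and using (G1) of $A$ under which $E=\pi_{12}(\cV(R))=\pi_{23}(\cV(R))$, yields the key identity
\[
(s_1\times s_1^{-1}s_2)(E)=E,
\]
which I call $(\star)$.

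Granting $(\star)$, part (1) is immediate from $\pi_{12}\circ(\id\times s_1\times s_2)=(\id\times s_1)\circ\pi_{12}$ and $\pi_{23}\circ(\id\times s_1\times s_2)=(s_1\times s_2)\circ\pi_{23}$: these show $\pi_{12},\pi_{23}$ restrict to isomorphisms on $\cV(R')$ exactly when they do on $\cV(R)$, while $\pi_{12}(\cV(R'))=(\id\times s_1)(E)=(s_1\times s_2)(E)=\pi_{23}(\cV(R'))$, the middle equality being $(\star)$ after applying $\id\times s_1$. For part (2), if $A$ satisfies (G2) via a geometric pair $(E,\sigma)$, I set $E':=(\id\times s_1)(E)$ and $\sigma':=(s_1\times s_2)\circ\sigma\circ(\id\times s_1)^{-1}$; \lemref{lem.zero} turns $(\id\otimes t_1\otimes t_2)(R')=R$ into the assertion that $R'$ is the annihilator of $(\id\times s_1\times s_2)$ applied to the correspondence $\{(p_1,p_2,(\pi_2\circ\sigma)(p_1,p_2)):(p_1,p_2)\in E\}$, and a direct check shows this image is exactly the correspondence of $(E',\sigma')$. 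Here $(\star)$ is precisely what guarantees $\sigma'(E')\subseteq E'$ together with $\pi_1\circ\sigma'=\pi_2$, i.e. $\sigma'\in\Aut_k^G E'$, so $(E',\sigma')$ is a genuine geometric pair and $A'=\cA(E',\sigma')$. The same pair arises as $\cP(A')$ in the (G1) computation, so the two constructions agree and part (3) follows by combining (1) and (2). All converse implications follow by symmetry, since $A=(A')^{\theta^{-1}}$ for the inverse twisting system.

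I expect the main obstacle to be the step establishing $(\star)$: one must extract from the abstract twisting-system axiom the precise triangular form $t_1\otimes t_2t_1^{-1}\otimes t_3t_2^{-1}$ of $\theta_1$ in degree $3$, verify that it descends to $A_3=V^{\otimes 3}/R$, and then match the resulting invariance of $\cV(R)$ with the geometric-pair structure. A secondary subtlety arises in (2), where the base locus is the one $E$ supplied by the (G2) pair rather than $\pi_{12}(\cV(R))$: to run the argument one uses that the correspondence of a geometric pair is recovered from its annihilator (so that invariance of $\cV(R)$ descends to invariance of the correspondence, hence of its base), which is exactly the compatibility encoded in the identity $A=\cA(\cP(A))$ of \dfnref{defin.GA}.
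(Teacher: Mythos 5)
Your part (1) is correct, and it is essentially the paper's argument in different packaging. The paper keeps the whole $\ZZ$-indexed family of isomorphisms $\phi_n\colon A'\to A$ supplied by \lemref{lem.Zhang}, deduces $(\phi_n|_V\otimes\phi_{n+1}|_V\otimes\phi_{n+2}|_V)(R')=R$ for every $n$, and obtains $\pi'_{12}(\cV(R'))=\pi'_{23}(\cV(R'))$ by shifting the index $n$; you instead keep the single identification $(\id\otimes t_1\otimes t_2)(R')=R$ and compensate with the invariance of $R$ under $t_1\otimes t_2t_1^{-1}\otimes t_3t_2^{-1}$, extracted from the twisting-system axiom. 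These are equivalent data, and your derivation of $(\star)$ from them is sound precisely because (G1) gives $E=\pi_{12}(\cV(R))=\pi_{23}(\cV(R))$, so invariance of $\cV(R)$ can be pushed down to $E$. For the same reason part (3) goes through, since there the relevant (G2)-pair is $\cP(A)$ itself.

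The genuine gap is in part (2). There $(E,\sigma)$ is merely some geometric pair whose correspondence $\Gamma$ has annihilator $R$; nothing identifies $E$ with $\pi_{12}(\cV(R))$, and $\cV(R)$ may strictly contain $\Gamma$, so invariance of $\cV(R)$ says nothing about $E$ and $(\star)$ is unavailable. Your patch --- ``the correspondence of a geometric pair is recovered from its annihilator,'' justified by the identity $A=\cA(\cP(A))$ --- is circular: that identity is the definition of \emph{geometric}, i.e.\ it presupposes (G1), which is exactly what part (2) does not provide, and the recovery claim is false under (G2) alone. It genuinely fails: take distinct $a,b,b'\in\PP(V^{\ast})=\PP^1$, let $E=\{(a,b),(b,a),(a,b'),(b',a)\}$ with $\sigma(a,b)=(b,a)$, $\sigma(b,a)=(a,b)$, $\sigma(a,b')=(b',a)$, $\sigma(b',a)=(a,b')$, and let $R$ be the annihilator of $\Gamma=\{(a,b,a),(b,a,b),(a,b',a),(b',a,b')\}$. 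Then $\cV(R)=(\{a\}\times\PP^1\times\{a\})\cup\{(x,a,x)\mid x\in\PP^1\}\supsetneq\Gamma$; any $s\in\Aut_k\PP^1$ fixing $a$ is $\overline{t^{\ast}}$ for some $t\in\GL(V)$ with $(t\otimes t\otimes t)(R)=R$, hence gives a twisting system with $s_n=s^n$, and for $s$ of infinite order whose only fixed point is $a$ one has $(s\times s)(E)\neq E$. So $(\star)$ fails, and your $(E',\sigma')$ is not a geometric pair: $\sigma'$ carries $E'=(\id\times s)(E)$ onto $(s\times s^{2})(E)\neq E'$. (In this example the twisted algebra does still satisfy (G2), but only via a different pair supported on the two curves $(\{a\}\times\PP^1)\cup(\PP^1\times\{a\})$; producing such a replacement pair in general is the idea your proof is missing.) You have in fact isolated a weak point of the paper's own proof of (2), which checks only $\pi_1\circ\sigma'=\pi_2$ and never verifies $\sigma'(E')=E'$ --- but the resolution you propose does not close it.
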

	
	\begin{proof}
		In each case, it is enough to show one direction.
		
		(1) Assume that $A$ satisfies {\rm (G1)} with $\cP(A)=(E,\sigma)$.
		By Theorem \ref{thm.Zhang}, $A'$ is isomorphic to a twisted algebra of $A$.
		By Lemma \ref{lem.Zhang}, there exists a set of graded linear isomorphisms $\{\phi_{n}\}_{n \in \ZZ}$ from $A'$ to $A$ which satisfy
		\begin{center}
			$\phi_{n}(ab)=\phi_{n}(a)\phi_{n+m}(b)$
		\end{center}
		for any $l,m,n \in \ZZ$ and $a \in A'_m$, $b \in A'_l$.
		For any $n \in \ZZ$, we denote by $\phi_{n}|_V \in \GL(V)$ the restricted automorphism of $\phi_{n}$ and
		define by $\tau_n:=\overline{\phi_n|_V^{\ast}}$ an automorphism of $\PP(V^{\ast})$.
		Since $\sum_{i}u_i\otimes v_i \otimes w_i \in R'$ if and only if, for each $n \in \ZZ$,
		$\sum_{i}\phi_n|_V(u_i) \otimes \phi_{n+1}|_V(v_i) \otimes \phi_{n+2}|_V(w_i) \in R$, it holds that
		\begin{center}
			$(\phi_{n}|_V \otimes \phi_{n+1}|_V \otimes \phi_{n+2}|_V)(R')=R$
		\end{center}
		for any $n \in \ZZ$. By Lemma \ref{lem.zeroR}, we have that
		\begin{center}
			$\cV(R')=(\tau_n \times \tau_{n+1} \times \tau_{n+2})(\cV(R))$
		\end{center}
		for any $n \in \ZZ$.
		Consequently, there are commutative diagrams
		\begin{center}
			$\xymatrix@C=40pt{
				\cV(R)\ar[r]^-{\tau_n \times \tau_{n+1} \times \tau_{n+2}}_{\cong}\ar[d]_-{\pi_{12}}&\cV(R')\ar[d]^-{\pi'_{12}}\\
				E\ar[r]^-{\cong}_-{\tau_n \times \tau_{n+1}}&\pi'_{12}(\cV(R')),
			}$
			$\xymatrix@C=40pt{
				\cV(R)\ar[r]^-{\tau_n \times \tau_{n+1} \times \tau_{n+2}}_-{\cong}\ar[d]_-{\pi_{23}}&\cV(R')\ar[d]^-{\pi'_{23}}\\
				E\ar[r]^-{\cong}_-{\tau_{n+1} \times \tau_{n+2}}&\pi'_{23}(\cV(R'))
			}$
		\end{center}
		for any $n \in \ZZ$.
		Since $\cP(A)=(E,\sigma)$, $\pi'_{12}$ and $\pi'_{23}$ are isomorphisms, and
		\begin{align*}
			\pi'_{12}(\cV(R'))&=\pi'_{12}((\tau_n \times \tau_{n+1} \times \tau_{n+2})(\cV(R))) \\
			&=(\tau_n \times \tau_{n+1})(E) \\
			&=\pi'_{23}((\tau_{n-1} \times \tau_n \times \tau_{n+1})(\cV(R))) \\
			&=\pi'_{23}(\cV(R')).
		\end{align*}
		Thus $A'$ satisfies {\rm (G1)} with $\cP(A')=(E',\sigma')$
		where $E'=\pi'_{12}(\cV(R'))$ and $\sigma'=\pi'_{23}{\pi'_{12}}^{-1}$.
		
		(2) Assume that $A$ satisfies {\rm (G2)} with $A=\cA(E,\sigma)$.
		Using the same notation as the proof of (1), we define
		\begin{center}
			$E':=(\tau_0 \times \tau_{1})(E)$ and $\sigma':=(\tau_{1} \times \tau_2) \circ \sigma \circ (\tau_0 \times \tau_1)^{-1}$.
		\end{center}
		For any $(p_1,p_2) \in E$, it holds that
		\begin{align*}
			(\pi_1 \circ \sigma')(\tau_0(p_1),\tau_1(p_2))&=(\pi_1 \circ (\tau_1 \times \tau_2) \circ \sigma)(p_1,p_2) \\
			&=(\pi_1 \circ (\tau_1 \times \tau_2))(p_2,(\pi_2 \circ \sigma)(p_1,p_2)) \\
			&=\tau_1(p_2)=\pi_2(\tau_0(p_1),\tau_1(p_2)),
		\end{align*}
		so $(E',\sigma')$ is a geometric pair.
		Take $g \in V^{\otimes 3}$. It follows from Lemma \ref{lem.zero} that
		\begin{align*}
			&g(q_1,q_2,(\pi_2 \circ \sigma')(q_1,q_2))=0 \quad \forall (q_1,q_2) \in E' \\
			\Longleftrightarrow\,\,\, &g(\tau_0(p_1),\tau_1(p_2),(\pi_2\circ\sigma')(\tau_0(p_1),\tau_1(p_2)))=0 \quad \forall (p_1,p_2) \in E \\
			\Longleftrightarrow\,\,\, &g(\tau_0(p_1),\tau_1(p_2),(\pi_2\circ(\tau_1 \times \tau_2)\circ\sigma)(p_1,p_2))=0 \quad \forall (p_1,p_2) \in E \\
			\Longleftrightarrow\,\,\, &g(\tau_0(p_1),\tau_1(p_2),\tau_2((\pi_2\circ\sigma)(p_1,p_2)))=0 \quad \forall (p_1,p_2) \in E \\
			\Longleftrightarrow\,\,\, &((\phi_0|_V \otimes \phi_1|_V \otimes \phi_2|_V)(g))(p_1,p_2,(\pi_2 \circ \sigma)(p_1,p_2))=0 \quad \forall (p_1,p_2) \in E \\
			\Longleftrightarrow\,\,\, &(\phi_0|_V \otimes \phi_1|_V \otimes \phi_2|_V)(g) \in R \\
			\Longleftrightarrow\,\,\, &g \in R'.
		\end{align*}
		Thus $A'$ satisfies {\rm (G2)} with $A'=\cA(E',\sigma')$.
		
		(3) Assume that $A$ is geometric.
		By the proofs of (1) and (2), it holds that $A'$ satisfies {\rm (G1)} and {\rm (G2)} with $A'=\cA(\cP(A'))$, so
		$A'$ is geometric.
	\end{proof}

	The following results are useful to study geometric algebras. 
	
	\begin{theorem}[{cf. \cite[Lemma 2.5]{MU}}]\label{thm.G1}
		Let $A=T(V)/(R), A'=T(V)/(R')$ be cubic algebras
		satisfying {\rm (G1)} with $\cP(A)=(E,\sigma), \cP(A')=(E',\sigma')$.
		\begin{enumerate}
			\item
			If $A \cong A'$ as graded algebras, then there exists an automorphism $\tau$ of $\PP(V^{\ast})$
			such that $\tau \times \tau$ restricts to an isomorphism $\tau \times \tau: E \to E'$ and
			$$\xymatrix{
				E\ar[r]^-{\tau \times \tau}\ar[d]_-{\sigma}&E'\ar[d]^-{\sigma'}\\
				E\ar[r]_-{\tau \times \tau}&E'
			}$$
			commutes.
			
			\item
			If $\GrMod A \cong \GrMod A'$, then there exists a sequence of automorphisms $\tau_n$ of $\PP(V^{\ast})$
			such that $\tau_n \times \tau_{n+1}$ restricts to an isomorphism
			$\tau_n \times \tau_{n+1}: E \to E'$ and
			$$\xymatrix@C=40pt{
				E\ar@<0.8ex>[r]^-{\tau_n \times \tau_{n+1}}\ar[d]_-{\sigma}&E'\ar[d]^-{\sigma'}\\
				E\ar[r]_-{\tau_{n+1} \times \tau_{n+2}}&E'
			}$$
			commutes for any $n \in \ZZ$.
		\end{enumerate}
	\end{theorem}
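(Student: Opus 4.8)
The plan is to extract from the algebraic hypothesis a linear automorphism (or a whole sequence of them) of $V$ carrying $R'$ to $R$, pass to the induced dual projective automorphisms, and then run a diagram chase against the projections $\pi_{12},\pi_{23}$ that define the (G1) data. The structural fact I will use throughout is that, under (G1), the maps $\pi_{12}$ and $\pi_{23}$ restrict to isomorphisms $\cV(R)\to E$: a point of $E$ is a pair $(p_1,p_2)$ that extends uniquely to a triple $(p_1,p_2,p_3)\in\cV(R)$, and $\sigma$ is precisely the index shift $(p_1,p_2)\mapsto(p_2,p_3)$. The same description holds for $A'$ with $E',\sigma',\pi'_{12},\pi'_{23}$.

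For (1), first I would take a graded algebra isomorphism $\psi\colon A'\to A$ and set $\phi:=\psi|_V\in\GL(V)$. Since $\psi$ sends the defining relations of $A'$ onto those of $A$, we get $(\phi\otimes\phi\otimes\phi)(R')=R$, so putting $\tau:=\overline{\phi^\ast}\in\Aut\PP(V^\ast)$, \lemref{lem.zeroR} yields $\cV(R')=(\tau\times\tau\times\tau)(\cV(R))$. Exactly as in the proof of \thmref{thm.GM}, this produces two commuting squares relating $\pi_{12}$ to $\pi'_{12}$ and $\pi_{23}$ to $\pi'_{23}$ (with $\tau\times\tau$ on the base in each case); because all four projections are isomorphisms by (G1), $\tau\times\tau$ restricts to an isomorphism $E\to E'$. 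It then remains to check commutativity of the $\sigma$-square, which I would do pointwise: for $(p_1,p_2)\in E$ write $(p_1,p_2,p_3):=\pi_{12}^{-1}(p_1,p_2)\in\cV(R)$, so that $\sigma(p_1,p_2)=(p_2,p_3)$ and $(\tau\times\tau)(\sigma(p_1,p_2))=(\tau(p_2),\tau(p_3))$. On the other hand $(\tau(p_1),\tau(p_2),\tau(p_3))\in\cV(R')$ is the unique $\pi'_{12}$-preimage of $(\tau(p_1),\tau(p_2))$, so $\sigma'(\tau(p_1),\tau(p_2))=\pi'_{23}(\tau(p_1),\tau(p_2),\tau(p_3))=(\tau(p_2),\tau(p_3))$, and the two sides coincide.

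For (2), I would replace the single isomorphism by the twisting machinery already deployed for \thmref{thm.GM}. By \thmref{thm.Zhang} and \lemref{lem.Zhang}, the equivalence $\GrMod A\cong\GrMod A'$ supplies a family $\{\phi_n\}_{n\in\ZZ}$ of graded linear isomorphisms $A'\to A$ with $\phi_n(ab)=\phi_n(a)\phi_{n+m}(b)$; setting $\tau_n:=\overline{\phi_n|_V^\ast}$ one obtains $(\phi_n|_V\otimes\phi_{n+1}|_V\otimes\phi_{n+2}|_V)(R')=R$ and hence $\cV(R')=(\tau_n\times\tau_{n+1}\times\tau_{n+2})(\cV(R))$ for every $n$. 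The same two squares (now with $\tau_n\times\tau_{n+1}$ refining $\pi_{12}/\pi'_{12}$ and $\tau_{n+1}\times\tau_{n+2}$ refining $\pi_{23}/\pi'_{23}$) show that $\tau_n\times\tau_{n+1}$ restricts to an isomorphism $E\to E'$, and the identical pointwise chase — carrying $(p_1,p_2,p_3)\in\cV(R)$ to $(\tau_n(p_1),\tau_{n+1}(p_2),\tau_{n+2}(p_3))\in\cV(R')$ — gives $\sigma'\circ(\tau_n\times\tau_{n+1})=(\tau_{n+1}\times\tau_{n+2})\circ\sigma$. Note that (1) is the specialization of this argument to the constant family $\phi_n=\psi$, so one could in principle deduce it from (2).

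The hard part will be the bookkeeping in the diagram chase: ensuring each $\tau_n$ acts on the correct factor and invoking (G1) at precisely the right moment, so that the preimages $\pi_{12}^{-1}$ and $(\pi'_{12})^{-1}$ are genuinely well-defined and single-valued — this is exactly the clause of (G1) that makes $\sigma$ a morphism of $E$ rather than a mere correspondence. The other place demanding care is the direction of \lemref{lem.zeroR}: the linear map carries $R'$ to $R$ while the dual projective map carries $\cV(R)$ to $\cV(R')$, and mixing these up would invert $\tau$ and break commutativity.
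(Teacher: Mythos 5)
Your proposal is correct and takes essentially the same route as the paper: the paper also proves (2) by invoking the twisting machinery from the proof of Theorem \ref{thm.GM} (via Theorem \ref{thm.Zhang} and Lemma \ref{lem.Zhang}) to produce the $\tau_n$, uses Lemma \ref{lem.zeroR} to get $\cV(R')=(\tau_n\times\tau_{n+1}\times\tau_{n+2})(\cV(R))$, and then reads off the commuting square from the (G1) description $\sigma=\pi_{23}\pi_{12}^{-1}$, $\sigma'=\pi'_{23}{\pi'_{12}}^{-1}$, treating (1) as the analogous (constant) case. Your explicit pointwise chase simply fills in the step the paper leaves implicit.
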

	
	\begin{proof}
		We will give a proof for $(2)$. The proof of $(1)$ are similar.
		
		(2) As is explained in the proof of Theorem \ref{thm.GM} (1), if $\GrMod A \cong \GrMod A'$, then
		there exists a sequence of automorphisms $\tau_n$ of $\PP(V^{\ast})$
		such that the following diagrams
		\begin{center}
			$\xymatrix@C=40pt{
				\cV(R)\ar[r]^-{\tau_n \times \tau_{n+1} \times \tau_{n+2}}_{\cong}\ar[d]_-{\pi_{12}}&\cV(R')\ar[d]^-{\pi'_{12}}\\
				\pi_{12}(\cV(R))\ar[r]^-{\cong}_-{\tau_n \times \tau_{n+1}}&\pi'_{12}(\cV(R')),
			}$
			$\xymatrix@C=40pt{
				\cV(R)\ar[r]^-{\tau_n \times \tau_{n+1} \times \tau_{n+2}}_-{\cong}\ar[d]_-{\pi_{23}}&\cV(R')\ar[d]^-{\pi'_{23}}\\
				\pi_{23}(\cV(R))\ar[r]^-{\cong}_-{\tau_{n+1} \times \tau_{n+2}}&\pi'_{23}(\cV(R'))
			}$
		\end{center}
		commute for any $n \in \ZZ$.
		Since $\cP(A)=(E,\sigma)$ and $\cP(A')=(E',\sigma')$,
		\begin{center}
			$\xymatrix@C=40pt{
				E \ar[r]^-{\tau_n \times \tau_{n+1}}_-{\cong} \ar[d]_-{\sigma} & E' \ar[d]^-{\sigma'} \\
				E \ar[r]^-{\cong}_-{\tau_{n+1} \times \tau_{n+2}} & E'
			}$
		\end{center}
		commutes for any $n \in \ZZ$.
		Thus the statement holds.
	\end{proof}
	
	\begin{theorem}[{cf. \cite[Lemma 2.6]{MU}}]\label{thm.G2}
		Let $A=T(V)/(R), A'=T(V)/(R')$ be cubic algebras
		satisfying {\rm (G2)} with $A=\cA(E,\sigma), A'=\cA(E',\sigma')$.
		\begin{enumerate}
			\item
			If there exists an automorphism $\tau$ of $\PP(V^{\ast})$
			such that $\tau \times \tau$ restricts to an isomorphism $\tau \times \tau: E \to E'$ and
			$$\xymatrix{
				E\ar[r]^-{\tau \times \tau}\ar[d]_-{\sigma}&E'\ar[d]^-{\sigma'}\\
				E\ar[r]_-{\tau \times \tau}&E'
			}$$
			commutes, then $A \cong A'$ as graded algebras.
			
			\item
			If there exists a sequence of automorphisms $\tau_n$ of $\PP(V^{\ast})$
			such that $\tau_n \times \tau_{n+1}$ restricts to an isomorphism
			$\tau_n \times \tau_{n+1}: E \to E'$ and
			$$\xymatrix@C=40pt{
				E\ar@<0.8ex>[r]^-{\tau_n \times \tau_{n+1}}\ar[d]_-{\sigma}&E'\ar[d]^-{\sigma'}\\
				E\ar[r]_-{\tau_{n+1} \times \tau_{n+2}}&E'
			}$$
			commutes for any $n \in \ZZ$, then $\GrMod A \cong \GrMod A'$.
		\end{enumerate}
	\end{theorem}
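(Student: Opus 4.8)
The plan is to reverse the arguments used in the proofs of Theorems \ref{thm.GM} and \ref{thm.G1}: from the geometric data I will build an honest graded algebra isomorphism in (1) and the twisting data required by Lemma \ref{lem.Zhang} in (2). The crucial preliminary observation is that every automorphism $\tau$ of $\PP(V^{\ast})$ has the form $\overline{\phi^{\ast}}$ for some $\phi \in \GL(V)$. Indeed $\Aut(\PP(V^{\ast}))=\PGL(V^{\ast})$, so $\tau$ lifts to some $\psi \in \GL(V^{\ast})$; since the transpose map $\phi \mapsto \phi^{\ast}$ is a bijection $\GL(V) \to \GL(V^{\ast})$, one may choose $\phi \in \GL(V)$ with $\phi^{\ast}=\psi$, whence $\overline{\phi^{\ast}}=\tau$. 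This converts the given projective isomorphisms back into linear automorphisms of $V$, which is exactly the input needed to produce algebra maps, and it lets us read Lemma \ref{lem.zero} in the direction we require.

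For (1), I would fix $\phi \in \GL(V)$ with $\overline{\phi^{\ast}}=\tau$ and prove $(\phi \otimes \phi \otimes \phi)(R')=R$; since $\phi^{\otimes 3}$ is a linear isomorphism of $V^{\otimes 3}$ carrying the ideal generated by $R'$ to that generated by $R$, this shows $\phi$ induces a graded algebra isomorphism $A'=T(V)/(R') \to T(V)/(R)=A$. To establish the equality of relation spaces, take $g \in V^{\otimes 3}$ and use the (G2) descriptions of $R$ and $R'$: by Lemma \ref{lem.zero}, $(\phi^{\otimes 3})(g)$ vanishes at $(p_1,p_2,(\pi_2 \circ \sigma)(p_1,p_2))$ if and only if $g$ vanishes at $(\tau(p_1),\tau(p_2),\tau((\pi_2 \circ \sigma)(p_1,p_2)))$. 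Now $\sigma \in \Aut_k^G E$ gives $\sigma(p_1,p_2)=(p_2,(\pi_2\circ\sigma)(p_1,p_2))$, and comparing the second components of $(\tau\times\tau)(\sigma(p_1,p_2))$ and $\sigma'(\tau(p_1),\tau(p_2))$ in the commuting square forces $\tau((\pi_2 \circ \sigma)(p_1,p_2))=(\pi_2 \circ \sigma')(\tau(p_1),\tau(p_2))$. Since $\tau\times\tau$ maps $E$ isomorphically onto $E'$, substituting $q_i=\tau(p_i)$ turns the vanishing condition into membership in $R'$, yielding $(\phi^{\otimes 3})(g)\in R \iff g\in R'$.

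For (2), I would lift each $\tau_n$ to $\psi_n \in \GL(V)$ with $\overline{\psi_n^{\ast}}=\tau_n$ and run the same computation against the shifted square $\sigma'\circ(\tau_n\times\tau_{n+1})=(\tau_{n+1}\times\tau_{n+2})\circ\sigma$, obtaining $(\psi_n \otimes \psi_{n+1} \otimes \psi_{n+2})(R')=R$ for every $n$. I would then define graded linear maps $\phi_n : A' \to A$ degree by degree, letting $\phi_n$ act on $V^{\otimes d}$ by the staggered tensor $\psi_n \otimes \psi_{n+1} \otimes \cdots \otimes \psi_{n+d-1}$. Restricting this to the slot $V^{\otimes a} \otimes R' \otimes V^{\otimes b}$ of the degree-$d$ part of $(R')$, the three middle factors are $\psi_{n+a}\otimes\psi_{n+a+1}\otimes\psi_{n+a+2}$, which send $R'$ to $R$ by the identity with $n$ replaced by $n+a$; summing over slots shows each $\phi_n$ descends to a graded linear isomorphism $A'\to A$. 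By construction these staggered tensors satisfy $\phi_n(ab)=\phi_n(a)\phi_{n+m}(b)$ for $a\in A'_m,\ b\in A'_l$, so Lemma \ref{lem.Zhang} exhibits $A'$ as isomorphic to a twisted algebra of $A$, and Theorem \ref{thm.Zhang} then gives $\GrMod A \cong \GrMod A'$.

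The main obstacle is the bookkeeping in (2): checking that the degree-wise staggered tensor is well defined on the quotient $T(V)/(R')$ and respects the twisting identity. Both reduce formally to the single-slot identity $(\psi_n \otimes \psi_{n+1} \otimes \psi_{n+2})(R')=R$, but one must verify that the index shifts line up, namely that the factors meeting the embedded copy of $R'$ in position $a$ are exactly $\psi_{n+a},\psi_{n+a+1},\psi_{n+a+2}$. All of the genuinely geometric content—Lemma \ref{lem.zero} together with the $\Aut_k^G$ condition and the commuting squares—is confined to the single computation that proves this identity; everything else is the formal apparatus of Zhang's twisting theory.
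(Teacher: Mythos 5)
Your proposal is correct and follows essentially the same route as the paper: lift the projective automorphisms $\tau_n$ to linear maps on $V$, use Lemma \ref{lem.zero} together with the commuting squares and the (G2) descriptions to prove $(\psi_n\otimes\psi_{n+1}\otimes\psi_{n+2})(R')=R$, then assemble the staggered tensor maps $\Phi_n$ and invoke Lemma \ref{lem.Zhang} and Theorem \ref{thm.Zhang}. Your treatment is in fact slightly more explicit than the paper's at two points the paper leaves implicit — the existence of linear lifts $\phi$ with $\overline{\phi^{\ast}}=\tau$, and the slot-by-slot verification that the staggered tensor is well defined on the quotient — but these are elaborations of the same argument, not a different one.
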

	
	\begin{proof}
		We will give a proof for $(2)$. The proof of $(1)$ are similar.
		
		(2) Let $\{\tau_n\}_{n \in \ZZ}$ be a sequence of automorphisms of $\PP(V^{\ast})$
		such that for any $n \in \ZZ$, $\tau_n \times \tau_{n+1}$ restricts to an isomorphism
		$\tau_n \times \tau_{n+1}: E \to E'$ and
		$$\xymatrix@C=40pt{
			E\ar@<0.8ex>[r]^-{\tau_n \times \tau_{n+1}}\ar[d]_-{\sigma}&E'\ar[d]^-{\sigma'}\\
			E\ar[r]_-{\tau_{n+1} \times \tau_{n+2}}&E'
		}$$
		commutes.
		For each $n \in \ZZ$, there exists $\phi_n \in \GL(V)$ such that $\tau_n=\overline{\phi_n^{\ast}}$.
		Take $g \in V^{\otimes 3}$. By the condition {\rm (G2)} and Lemma \ref{lem.zero}, it holds that
		\begin{align*}
			&g \in R' \\
			\Longleftrightarrow\,\,\, &g(q_1,q_2,(\pi_2 \circ \sigma')(q_1,q_2))=0 \quad \forall (q_1,q_2) \in E' \\
			\Longleftrightarrow\,\,\, &g(\tau_n(p_1),\tau_{n+1}(p_2),(\pi_2 \circ \sigma')(\tau_n(p_1),\tau_{n+1}(p_2)))=0 \quad \forall (p_1,p_2) \in E \\
			\Longleftrightarrow\,\,\, &g(\tau_n(p_1),\tau_{n+1}(p_2),(\pi_2 \circ (\tau_{n+1} \times \tau_{n+2}) \circ \sigma)(p_1,p_2))=0 \quad \forall (p_1,p_2) \in E \\
			\Longleftrightarrow\,\,\, &((\phi_n \otimes \phi_{n+1} \otimes \phi_{n+2})(g))(p_1,p_2,(\pi_2 \circ \sigma)(p_1,p_2))=0 \quad \forall (p_1,p_2) \in E \\
			\Longleftrightarrow\,\,\, &(\phi_n \otimes \phi_{n+1} \otimes \phi_{n+2})(g) \in R 
		\end{align*}
		for any $n \in \ZZ$, so we have that
		\begin{center}
			$(\phi_{n} \otimes \phi_{n+1} \otimes \phi_{n+2})(R')=R$
		\end{center}
		for any $n \in \ZZ$.
		Define graded linear isomorphisms $\Phi_n:T(V) \to T(V)$ by
		\begin{center}
			$\Phi_n|_{V^{\otimes l}}(v_1 \otimes v_2 \otimes \cdots \otimes v_l)=\phi_{n}(v_1) \otimes \phi_{n+1}(v_2) \otimes \cdots \otimes \phi_{n+l-1}(v_l)$
		\end{center}
		for $v_i \in V$. Since $(\phi_{n} \otimes \phi_{n+1} \otimes \phi_{n+2})(R')=R$, $\Phi_n$ induces well-defined graded linear isomorphisms
		$\Phi_n: A' \to A$ for any $n \in \ZZ$. By the definition of $\Phi_n$, it is clear that
		\begin{center}
			$\Phi_{n}(ab)=\Phi_{n}(a)\Phi_{n+m}(b)$
		\end{center}
		for any $l,m,n \in \ZZ$ and $a \in A'_m$, $b \in A'_l$.
		By Lemma \ref{lem.Zhang}, $A'$ is isomorphic to a twisted algebra of $A$.
		Hence, it follows from Theorem \ref{thm.Zhang} that $\GrMod A \cong \GrMod A'$.
	\end{proof}

	\begin{definition}\label{defin.2PE}
		Let $E$ and $E'$ be projective varieties in $\PP(V^{\ast}) \times \PP(V^{\ast})$
		where $V$ is a finite-dimensional vector space.
		\begin{enumerate}
			\item
			We say that $E$ and $E'$
			are {\it equivalent},
			denoted by $E \sim E'$, if $E'=(\tau_{1} \times \tau_2)(E)$ for some $\tau_{1},\tau_2 \in \Aut_k \PP(V^{\ast})$.
			
			\item
			We say that $E$ and $E'$ are {\it $2$-equivalent}, denoted by $E \sim_2 E'$, if $E'=(\tau \times \tau)(E)$
			for some $\tau \in \Aut_k \PP(V^{\ast})$.
		\end{enumerate}
	\end{definition}
	
	It is clear that if $E$ and $E'$ are $2$-equivalent, then they are equivalent.
	Let $A=T(V)/(R)$ and $A'=T(V)/(R')$ be geometric algebras with $\cP(A)=(E,\sigma)$ and $\cP(A')=(E',\sigma')$.
	If $A \cong A'$ (resp. $\GrMod A \cong \GrMod A'$), then $E$ and $E'$ are $2$-equivalent (resp. equivalent) by Theorem \ref{thm.G1}.
	As the first step of classification of geometric algebras up to graded algebra isomorphism (resp. graded Morita equivalence),
	we need to classify projective varieties up to $2$-equivalence (resp. equivalence).
	By the way, if $E \sim_2 E'$, that is, $E'=(\tau \times \tau)(E)$ for some $\tau \in \Aut_k \PP(V^{\ast})$, then
	we define an automorphism $\sigma''$ of $E'$ by
	\begin{center}
		$\sigma'':=(\tau \times \tau) \circ \sigma \circ (\tau \times \tau)^{-1}$.
	\end{center}
	The pair $(E',\sigma'')$ is a geometric pair, and
	the diagram
	$$\xymatrix{
		E\ar[r]^-{\tau \times \tau}\ar[d]_-{\sigma}&E'\ar[d]^-{\sigma''}\\
		E\ar[r]_-{\tau \times \tau}&E'
	}$$
	commutes, so $A \cong \cA(E',\sigma'')$ by Theorem \ref{thm.G2}.
	This means that we may assume that $E=E'$ to classify geometric algebras up to graded algebra isomorphism.
	\section{Classification of $3$-dimensional cubic AS-regular algebras}
	In \cite{ATV}, Artin-Tate-Van den Bergh found a nice geometric characterization
	of $3$-dimensional AS-regular algebras finitely generated in degree $1$.
	In this paper, we focus on the cubic case.
	\begin{theorem}[{\cite{ATV}}]\label{thm.ATV2}
		Every $3$-dimensional cubic AS-regular algebra $A$ is a geometric algebra.
		Moreover, $E$ is $\PP^1 \times \PP^1$ or a divisor of bidegree $(2,2)$ in $\PP^1 \times \PP^1$.
	\end{theorem}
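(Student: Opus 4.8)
The plan is to combine the matrix criterion of Theorem \ref{thm.ATV} with the Artin--Tate--Van den Bergh correspondence, translated into the language of Definition \ref{defin.GA}. Write $A=T(V)/(R)$ with $\dim V=\dim R=2$, fix bases $\{x_1,x_2\}$ of $V$ and $\{f_1,f_2\}$ of $R$, and form the matrix $\mathbf{M}\in M_2(V\otimes V)$ as in Section~2, so that $\mathbf{f}=\mathbf{M}\mathbf{x}$ and $\mathbf{g}=\mathbf{x}^{t}\mathbf{M}$. Identifying $\PP(V^{\ast})=\PP^1$, each entry $m_{ij}$ evaluates at $(p_1,p_2)\in\PP^1\times\PP^1$ to a bihomogeneous form of bidegree $(1,1)$. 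First I would describe $\cV(R)\subset\PP(V^{\ast})^{\times 3}$ explicitly: a point $(p_1,p_2,p_3)$ lies in $\cV(R)$ iff $f_1,f_2$ vanish there, which by $\mathbf{f}=\mathbf{M}\mathbf{x}$ means $\mathbf{M}(p_1,p_2)\,p_3=0$, i.e. $p_3$ spans the right kernel of the scalar matrix $\mathbf{M}(p_1,p_2)$. Since $A$ is AS-regular, Theorem \ref{thm.ATV} tells us $A$ is standard and the entries of $\mathbf{M}$ have empty common zero locus; standardness ($\mathbf{g}^{t}=Q\mathbf{f}$) gives $\langle g_1,g_2\rangle=R$, whence the dual description $(p_1,p_2,p_3)\in\cV(R)$ iff $p_1^{t}\,\mathbf{M}(p_2,p_3)=0$ ($p_1$ spans the left kernel).

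Next I would verify (G1) and identify $E$. Projecting the first description, $\pi_{12}(\cV(R))=\{(p_1,p_2)\mid\det\mathbf{M}(p_1,p_2)=0\}=:E$, and the empty-common-zero hypothesis forces $\mathbf{M}$ to have rank exactly $1$ along $E$ (rank $0$ would make all entries vanish). Hence the right kernel is one-dimensional and, being given by the cofactors of $\mathbf{M}$, varies morphically, so $\pi_{12}$ is an isomorphism onto $E$; the dual description gives the same for $\pi_{23}$ with image again $\{\det\mathbf{M}=0\}$. Thus $\pi_{12}(\cV(R))=\pi_{23}(\cV(R))=E$ and (G1) holds with $\sigma:=\pi_{23}\pi_{12}^{-1}$, and one checks $(\pi_1\circ\sigma)(p_1,p_2)=p_2=\pi_2(p_1,p_2)$, so $\sigma\in\Aut_k^G E$. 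Since $\det\mathbf{M}$ is a form of bidegree $(2,2)$, either $\det\mathbf{M}\equiv 0$, in which case $E=\PP^1\times\PP^1$, or $E$ is a divisor of bidegree $(2,2)$; this is exactly the claimed dichotomy.

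Finally, (G2) asks that $R$ equal the full space of $f\in V^{\otimes 3}$ vanishing on the graph $\{(p_1,p_2,(\pi_2\circ\sigma)(p_1,p_2))\}=\cV(R)$. The inclusion $R\subseteq\{f\mid f|_{\cV(R)}=0\}$ is immediate from the definition of $R$; the reverse inclusion is the substantive point and is where I expect the main difficulty to lie, since it amounts to bounding the space of degree-$(1,1,1)$ multilinear forms vanishing on $\cV(R)$ by the expected dimension $2$. Here I would invoke \cite[Proposition 4.3]{ATV} (equivalently, the Hilbert-series bookkeeping forcing the AS-regular relation space to coincide with the degree-$(1,1,1)$ part of the ideal of $\cV(R)$), which is the deep input yielding $A=\cA(\cP(A))$. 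With (G1), (G2), and $A=\cA(\cP(A))$ established, $A$ is a geometric algebra, completing the proof.
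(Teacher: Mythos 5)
You should know at the outset that the paper does not actually prove this statement: it is quoted directly from \cite{ATV}, and the only justification given anywhere in the text is the remark that it ``follows from \cite[Proposition 4.3]{ATV}.'' Your proposal is therefore best judged as a reconstruction of the ATV argument, and as such it is sound. The description of $\cV(R)$ as the right-kernel locus $\{(p_1,p_2,p_3) : \mathbf{M}(p_1,p_2)\,p_3=0\}$, the use of standardness (available because $A$ is AS-regular, via Theorem \ref{thm.ATV}) to obtain the dual left-kernel description through $\mathbf{g}^{t}=Q\mathbf{f}$, the observation that the empty common zero locus forces $\mathbf{M}$ to have rank exactly one along $\{\det\mathbf{M}=0\}$ so that the adjugate furnishes a morphic inverse to $\pi_{12}$ and $\pi_{23}$, and the resulting identification of both images with $E=\{\det\mathbf{M}=0\}$ --- hence $E=\PP^1\times\PP^1$ when $\det\mathbf{M}\equiv 0$ and a bidegree-$(2,2)$ divisor otherwise --- is precisely how the point variety, the condition (G1), and the stated dichotomy arise in \cite{ATV}. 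For (G2) you correctly isolate the genuinely hard step, namely that the multilinear forms vanishing on $\cV(R)$ are exactly $R$ and no more, and you defer it to \cite[Proposition 4.3]{ATV}; this is the same external input the paper itself leans on, so your proof is correct relative to that citation, with the caveat (which you acknowledge) that the deep content of the theorem is inherited rather than reproved --- exactly as in the paper.
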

	
	
	In this paper, we study two cases when $E=\PP^1 \times \PP^1$ and $E$ is a union of irreducible divisors of bidegree $(1,1)$ in $\PP^1 \times \PP^1$. 
	For each case, we
	\begin{itemize}
		\item[{\rm (I)}] give a complete list of defining relations of $3$-dimensional cubic AS-regular algebras,
		\item[{\rm (II)}] classify them up to graded algebra isomorphism in terms of their defining relations, and
		\item[{\rm (III)}] classify them up to graded Morita equivalence in terms of their defining relations.
	\end{itemize}
	To give a complete list of defining relations of $3$-dimensional cubic AS-regular algebras $\cA(E,\sigma)$ for each case,
	we need to find all geometric pairs $(E,\sigma)$ corresponding to each variety $E$.
	By the definition of a geometric pair, to determine all geometric pairs $(E,\sigma)$ is reduced to find $\Aut_k^G E$.
	Remark that since the identity $\id_E$ does not belong to $\Aut_k^G E$, $\Aut_k^G E$ is not a subgroup of $\Aut_k E$.
	Since $\Aut_k \PP^1 \cong \PGL_2(k)$, we often identify $\tau \in \Aut_k \PP^1$ with the representing matrix $\tau \in \PGL_2(k)$.
	
	We first treat the case $E=\PP^1 \times \PP^1$.
	We denote by $\nu$ an automorphism of $\PP^1 \times \PP^1$ defined by
	$\nu(p,q)=(q,p)$ for $(p,q) \in \PP^1 \times \PP^1$.
	\begin{lemma}[{\cite[p.99]{Ma}}]\label{lem.AutPP}
		Every automorphism of $\PP^1 \times \PP^1$ is written as $\tau_{1} \times \tau_2$ or $(\tau_{1} \times \tau_2) \circ \nu$
		for some $\tau_{1}, \tau_2 \in \Aut_k \PP^1$. Moreover, it holds that
		\begin{center}
			$\Aut_k (\PP^1 \times \PP^1) =(\Aut_k \PP^1 \times \Aut_k \PP^1) \rtimes \langle \nu \rangle$.
		\end{center}
	\end{lemma}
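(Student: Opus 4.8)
The plan is to exploit the two rulings of $\PP^1 \times \PP^1$ and to show that every automorphism either preserves or interchanges them. Write $\pi_1, \pi_2 \colon \PP^1 \times \PP^1 \to \PP^1$ for the two projections, and let $F_1 = \{p\} \times \PP^1$ and $F_2 = \PP^1 \times \{q\}$ be fibers of $\pi_1$ and $\pi_2$ respectively; their classes generate $\Pic(\PP^1 \times \PP^1) \cong \ZZ[F_1] \oplus \ZZ[F_2]$. Under the intersection form one has $[F_1]^2 = [F_2]^2 = 0$ and $[F_1] \cdot [F_2] = 1$, so $[F_1]$ and $[F_2]$ are the only primitive effective classes of self-intersection zero. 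The first step is to observe that any $\phi \in \Aut_k(\PP^1 \times \PP^1)$ induces via $\phi^*$ a symmetry of $\Pic$ preserving both the intersection form and the effective cone, and hence permutes the unordered pair $\{[F_1], [F_2]\}$.

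The second step splits into two cases. If $\phi^*$ fixes $[F_1]$ and $[F_2]$, then $\phi$ carries each fiber of $\pi_1$ to a fiber of $\pi_1$, so $\pi_1 \circ \phi$ is constant on the fibers of $\pi_1$ and descends to an automorphism $\tau_1$ of the base with $\pi_1 \circ \phi = \tau_1 \circ \pi_1$; symmetrically $\pi_2 \circ \phi = \tau_2 \circ \pi_2$. Since the map $(\pi_1, \pi_2)$ is the identity of $\PP^1 \times \PP^1$, this forces $\phi = \tau_1 \times \tau_2$. If instead $\phi^*$ swaps the two classes, then $\nu \circ \phi$ falls into the previous case, whence $\nu \circ \phi = \tau_1 \times \tau_2$, and using $\nu \circ (\tau_1 \times \tau_2) = (\tau_2 \times \tau_1) \circ \nu$ together with $\nu^2 = \id$ we get $\phi = (\tau_2 \times \tau_1) \circ \nu$. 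This proves the first assertion.

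For the semidirect-product structure, set $N := \Aut_k \PP^1 \times \Aut_k \PP^1$, realized inside $\Aut_k(\PP^1 \times \PP^1)$ as the maps $\tau_1 \times \tau_2$, and $H := \langle \nu \rangle \cong \ZZ/2\ZZ$. The previous paragraph gives $\Aut_k(\PP^1 \times \PP^1) = N H$. Moreover $N \cap H = \{\id\}$, because $\nu$ interchanges the two rulings whereas every element of $N$ preserves them, and $N$ is normal since $\nu (\tau_1 \times \tau_2) \nu^{-1} = \tau_2 \times \tau_1 \in N$. These three facts yield the internal semidirect product $\Aut_k(\PP^1 \times \PP^1) = N \rtimes H$.

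The main obstacle is the first step: justifying that an automorphism must permute the two rulings. This rests on the intrinsic characterization of $[F_1]$ and $[F_2]$ as the only primitive effective classes of self-intersection zero, together with the functoriality of $\phi^*$ on the Picard group and its compatibility with the intersection pairing. Once the rulings are known to be permuted, the descent argument producing the $\tau_i$ and the group-theoretic verification are routine.
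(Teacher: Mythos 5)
Your proof is correct, but there is nothing in the paper to compare it against: the paper offers no argument for this lemma at all, simply citing Maruyama [Ma, p.\ 99], so your proposal supplies a proof where the authors rely on the literature. Your route is the standard one for the Hirzebruch surface $\Sigma_0 = \PP^1 \times \PP^1$: the classes $[F_1],[F_2]$ of the two rulings are intrinsically characterized in $\Pic(\PP^1\times\PP^1)\cong \ZZ^2$ as the only primitive effective classes of square zero (since $(a[F_1]+b[F_2])^2 = 2ab$), any automorphism $\phi$ permutes them, and the two cases (preserve or swap) yield $\phi = \tau_1\times\tau_2$ or $\phi = (\tau_2\times\tau_1)\circ\nu$, after which the internal semidirect product $N\rtimes\langle\nu\rangle$ is a routine verification; this is essentially the argument underlying the cited reference. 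One small point you should make explicit: from ``$\phi^*$ fixes $[F_1]$'' the statement that $\phi$ carries fibers of $\pi_1$ to fibers of $\pi_1$ uses the pushforward, i.e.\ $\phi(F)$ has class $\phi_*[F_1] = (\phi^*)^{-1}[F_1] = [F_1]$, together with the observation that the only irreducible effective curves in class $[F_1]$ are the fibers $\{p\}\times\PP^1$ themselves; both are immediate, but the switch from $\phi^*$ to $\phi_*$ deserves a sentence. Likewise, the descended maps $\tau_i$ are a priori only bijective morphisms, and one should note they are automorphisms because $\phi^{-1}$ descends to their inverses (or because $\tau_1 = \pi_1\circ\phi\circ s$ for a section $s$ of $\pi_1$ exhibits $\tau_1$ as a morphism with morphism inverse). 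With those two sentences added, the proof is complete.
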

	
	
	
	\begin{lemma}\label{lem.AutPP3}
		$\Aut_k^{G} (\PP^1 \times \PP^1)=\{(\id_{\PP^1} \times \tau) \circ \nu \mid \tau \in \Aut_k \PP^1\}$.
	\end{lemma}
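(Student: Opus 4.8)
The plan is to apply Lemma~\ref{lem.AutPP} to reduce an arbitrary $\sigma \in \Aut_k(\PP^1 \times \PP^1)$ to one of two explicit normal forms, and then to test against each form the defining condition of $\Aut_k^G(\PP^1 \times \PP^1)$, namely that $(\pi_1 \circ \sigma)(p_1,p_2) = \pi_2(p_1,p_2) = p_2$ for all $(p_1,p_2)$. By Lemma~\ref{lem.AutPP}, either $\sigma = \tau_1 \times \tau_2$ or $\sigma = (\tau_1 \times \tau_2)\circ\nu$ for some $\tau_1,\tau_2 \in \Aut_k\PP^1$, so it suffices to compute $\pi_1 \circ \sigma$ in each case and determine when it agrees with the second projection.

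In the first case $\sigma(p_1,p_2) = (\tau_1(p_1),\tau_2(p_2))$, so $(\pi_1\circ\sigma)(p_1,p_2)=\tau_1(p_1)$, which depends only on $p_1$; since this would have to equal $p_2$ for every choice of $p_2$, no such $\sigma$ exists, and automorphisms of pure product form never lie in $\Aut_k^G(\PP^1\times\PP^1)$. In the second case $\nu(p_1,p_2)=(p_2,p_1)$ gives $\sigma(p_1,p_2)=(\tau_1(p_2),\tau_2(p_1))$, so $(\pi_1\circ\sigma)(p_1,p_2)=\tau_1(p_2)$; the requirement $\tau_1(p_2)=p_2$ for all $p_2\in\PP^1$ forces $\tau_1=\id_{\PP^1}$, while $\tau_2$ remains an arbitrary element of $\Aut_k\PP^1$. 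Conversely, every $\sigma=(\id_{\PP^1}\times\tau)\circ\nu$ satisfies the condition, since then $(\pi_1\circ\sigma)(p_1,p_2)=\id_{\PP^1}(p_2)=p_2$.

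Combining the two cases yields exactly the asserted description. The computation is routine once Lemma~\ref{lem.AutPP} is in hand; the only point to keep in mind is that $\Aut_k^G E$ is specified by a pointwise condition and is not a subgroup of $\Aut_k E$, so both inclusions must be extracted directly from the normal forms rather than via any group-theoretic shortcut (indeed $\id_E \notin \Aut_k^G E$).
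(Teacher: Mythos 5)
Your proposal is correct and follows essentially the same route as the paper: both invoke Lemma~\ref{lem.AutPP} to write $\sigma$ as $\tau_1\times\tau_2$ or $(\tau_1\times\tau_2)\circ\nu$, rule out the pure product form because $\tau_1(p_1)$ cannot equal an arbitrary $p_2$, and observe that in the swapped form the condition $\pi_1\circ\sigma=\pi_2$ holds exactly when $\tau_1=\id_{\PP^1}$. Your explicit check of the converse inclusion is a minor addition; the paper absorbs it into its ``if and only if'' statement.
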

	
	\begin{proof}
		Let $\sigma \in \Aut_k^G (\PP^1 \times \PP^1)$. By Lemma \ref{lem.AutPP}, $\sigma$ is written as
		$\sigma=(\tau_1 \times \tau_2) \circ \nu^i$
		where $\tau_1, \tau_2 \in \Aut_k \PP^1$ and $i=0,1$.
		If $i=0$, then since $\pi_1 \circ (\tau_1 \times \tau_2)=\pi_2$, it holds that $\tau_{1}(p)=q$ for any $(p,q) \in \PP^1 \times \PP^1$,
		but it contradicts that $\tau_1 \in \Aut_k \PP^1$.
		Assume that $i=1$.
		Since $(\pi_1 \circ (\tau_1 \times \tau_2) \circ \nu)(p,q)=\tau_1(q)$, we have that
		$\pi_1 \circ (\tau_1 \times \tau_2) \circ \nu=\pi_2$ if and only if $\tau_{1}=\id_{\PP^1}$.
		Hence, the statement holds.
	\end{proof}
	
	We next treat the case when $E$ is a union of irreducible divisors of bidegree $(1,1)$ in $\PP^1 \times \PP^1$.
	
	\begin{lemma}\label{lem.CC}
		Let $C=\cV(f) \subset \PP^1 \times \PP^1$ where $f \in k[x_1,y_1] \circ k[x_2,y_2]$.
		Assume that $C$ is irreducible.
		Then $\deg C=(1,1)$ if and only if there exists $\tau \in \Aut_k \PP^1$ such that
		$C=C_{\tau}:=\{(p,\tau(p)) \mid p \in \PP^1\}$.
	\end{lemma}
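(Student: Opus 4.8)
The plan is to translate the problem into linear algebra via the standard description of bidegree $(1,1)$ forms in the Segre product $k[x_1,y_1]\circ k[x_2,y_2]$ as bilinear forms, and then to show that the irreducibility of $C$ is exactly equivalent to nondegeneracy of the associated $2 \times 2$ matrix. Throughout I write $\mathbf{x}=(x_1,y_1)^{t}$, $\mathbf{y}=(x_2,y_2)^{t}$, and identify $\Aut_k \PP^1 \cong \PGL_2(k)$ as in the paragraph preceding the lemma.

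For the direction $(\Leftarrow)$, I would start from $\tau \in \Aut_k \PP^1$ represented by an invertible matrix $T=\begin{pmatrix} s & t \\ u & v \end{pmatrix}$, so that $\tau([x_1:y_1])=[sx_1+ty_1:ux_1+vy_1]$. The graph condition $[x_2:y_2]=\tau([x_1:y_1])$ is then equivalent to the vanishing of the bidegree $(1,1)$ form $f=(ux_1+vy_1)x_2-(sx_1+ty_1)y_2 \in k[x_1,y_1]\circ k[x_2,y_2]$. Because $T$ is invertible the coefficient row never vanishes, so $C_{\tau}=\cV(f)$ exactly; and since $C_{\tau}$ is the image of the closed immersion $p \mapsto (p,\tau(p))$ it is isomorphic to $\PP^1$, hence irreducible of bidegree $(1,1)$. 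Thus $\deg C = (1,1)$.

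For the direction $(\Rightarrow)$, I would use that $\deg C=(1,1)$ forces $C=\cV(f)$ for an irreducible bidegree $(1,1)$ form, which I encode as $f=\mathbf{x}^{t}M\mathbf{y}$ with $M \in M_2(k)$. The key step is to show $M$ is invertible. If $\rank M \le 1$, then $M=ab^{t}$ for column vectors $a,b$, whence $f=(\mathbf{x}^{t}a)(b^{t}\mathbf{y})$ factors as a linear form in $(x_1,y_1)$ times a linear form in $(x_2,y_2)$; in that case $\cV(f)$ is the union of a vertical line $\{a^{\perp}\}\times\PP^1$ and a horizontal line $\PP^1\times\{b^{\perp}\}$, contradicting irreducibility. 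Hence $M$ is invertible. Then for each $p=[x_1:y_1]$ the linear equation $(\mathbf{x}^{t}M)\mathbf{y}=0$ in $\mathbf{y}$ has a one-dimensional solution space (as $\mathbf{x}^{t}M\neq 0$), yielding a unique point $\tau(p)\in\PP^1$; writing $(\alpha,\beta)=\mathbf{x}^{t}M$ one reads off $\mathbf{y}\propto(\beta,-\alpha)^{t}=JM^{t}\mathbf{x}$ with $J=\begin{pmatrix} 0 & 1 \\ -1 & 0 \end{pmatrix}$, so $\tau$ is represented by the invertible matrix $JM^{t}$ and lies in $\Aut_k\PP^1$. By construction $C=\cV(f)=\{(p,\tau(p))\mid p\in\PP^1\}=C_{\tau}$.

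I expect the main obstacle to be precisely the rank argument: one must verify carefully that a degenerate bilinear form factors through the two projections and that its zero locus is genuinely reducible (a union of two distinct lines), which is what collides with the hypothesis that $C$ is irreducible. Once nondegeneracy of $M$ is established, the remaining work—reading off the representing matrix of $\tau$ and checking $C=C_{\tau}$—is a direct computation using the identification $\Aut_k\PP^1\cong\PGL_2(k)$.
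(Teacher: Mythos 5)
Your proof is correct and follows essentially the same route as the paper: both encode the bidegree $(1,1)$ form as a $2\times 2$ matrix, identify irreducibility of $\cV(f)$ with invertibility of that matrix (your $JM^{t}$ is, up to sign, exactly the paper's matrix $\begin{pmatrix} -a & -d \\ c & b \end{pmatrix}$), and read off $\tau$ from it. The only difference is that you spell out the rank-one factorization argument that the paper compresses into ``it follows from calculation.''
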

	
	\begin{proof}
		For $g=ax_1y_2+by_1x_2+cx_1x_2+dy_1y_2 \in k[x_1,y_1] \circ k[x_2,y_2]$ with $a,b,c,d \in k$, it follows from calculation that 
		$\cV(g)$ is irreducible if and only if $ab-cd \neq 0$.
		
		We assume that $\deg C=(1,1)$. Since $C=\cV(f)$ is irreducible, we have that $ab-cd \neq 0$, so
		$\tau:=
		\begin{pmatrix}
			-a & -d \\
			c & b
		\end{pmatrix} \in \Aut_k \PP^1$.
		Therefore, we have that $C=C_{\tau}$.
		Conversely, we assume that there exists $\tau=
		\begin{pmatrix}
			\a & \b \\
			\g & \d
		\end{pmatrix}
		\in \Aut_k \PP^1$ such that $C=C_{\tau}$.
		Then we set $g:=-\a x_1y_2+\d y_1x_2+\g x_1x_2-\b y_1y_2$.
		Since $\a\d-\b\c \neq 0$, $\cV(g)$ is irreducible and $C=\cV(g)$, so $\deg C=(1,1)$.
		Hence, the statement holds. 
		\end{proof}
		
		By Lemma \ref{lem.CC}, if $E$ is a union of irreducible divisors of bidegree $(1,1)$ in $\PP^1 \times \PP^1$, then
		$E$ is written as $E=C_{\tau_1} \cup C_{\tau_2}$ for some $\tau_1,\tau_2 \in \Aut_k \PP^1$.
		We remark that $C_{\tau_{1}} \cap C_{\tau_2} \neq \emptyset$
		because $k$ is an algebraically closed field. For $P,Q \in \PGL_2(k)$,
		we denote by $P \sim Q$ when two matrices are similar in $\PGL_2(k)$.
		\begin{theorem}\label{thm.Bez}
			Let $E=C_{\tau_1} \cup C_{\tau_2}$ be a union of two irreducible divisors of bidegree $(1,1)$ in $\PP^1 \times \PP^1$
			where $\tau_1, \tau_2 \in \Aut_k \PP^1$.
			Then one of the following statements holds:
			\begin{enumerate}
				\item[{\rm (1)}] $|C_{\tau_1} \cap C_{\tau_2}|=2$ if and only if $\tau_2^{-1}\tau_1 \sim
				\begin{pmatrix}
					\l & 0 \\
					0 & 1 
				\end{pmatrix}$ for some $\l \in k$ with $\l \neq 0,1$.
				\item[{\rm (2)}] $|C_{\tau_1} \cap C_{\tau_2}|=1$ if and only if $\tau_2^{-1}\tau_1 \sim
				\begin{pmatrix}
					1 & 1 \\
					0 & 1
				\end{pmatrix}$.
				\item[{\rm (3)}] $|C_{\tau_1} \cap C_{\tau_2}|=\infty$ if and only if
				$\tau_1=\tau_2$.
			\end{enumerate}
		\end{theorem}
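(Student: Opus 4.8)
The plan is to reduce the set-theoretic intersection count to a count of fixed points of the single Möbius transformation $\psi := \tau_2^{-1}\tau_1 \in \PGL_2(k)$, and then to read those fixed points off from the Jordan normal form of $\psi$. First I would observe that a point $(p,q)$ lies in $C_{\tau_1} \cap C_{\tau_2}$ exactly when $q = \tau_1(p) = \tau_2(p)$, i.e.\ when $q = \tau_1(p)$ and $p$ is a fixed point of $\psi$. Since each $C_{\tau_i}$ is the graph of $\tau_i$, the first projection $\pi_1$ is injective on it, so $\pi_1$ restricts to a bijection from $C_{\tau_1} \cap C_{\tau_2}$ onto the fixed-point set $\operatorname{Fix}(\psi) \subset \PP^1$. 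Thus $|C_{\tau_1} \cap C_{\tau_2}| = |\operatorname{Fix}(\psi)|$, and the whole theorem becomes a statement about how many points $\psi$ fixes.

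Next I would classify $\psi$ up to similarity in $\PGL_2(k)$. Because $k$ is algebraically closed, a representing matrix has a Jordan form, and after scaling (which is harmless in $\PGL_2(k)$) every element is similar to exactly one of: the identity; the parabolic form $\begin{pmatrix} 1 & 1 \\ 0 & 1 \end{pmatrix}$, arising from a single nontrivial Jordan block; or the diagonal form $\begin{pmatrix} \l & 0 \\ 0 & 1 \end{pmatrix}$ with $\l \neq 0,1$, arising from the diagonalizable non-central case (equal eigenvalues collapse to the identity). The fixed points of $\psi$ on $\PP^1$ are precisely the eigenlines of a representing matrix, so I can count directly: the identity fixes all of $\PP^1$; the parabolic form fixes only $[1:0]$; and the diagonal form with $\l \neq 0,1$ fixes exactly $[1:0]$ and $[0:1]$. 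This yields fixed-point counts $\infty$, $1$, and $2$ respectively.

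Finally I would assemble the biconditionals. The key bookkeeping point, which I expect to be the only step needing care, is that the cardinality $|\operatorname{Fix}(\psi)|$ is a similarity invariant: if $\psi' = \rho\psi\rho^{-1}$ then $\operatorname{Fix}(\psi') = \rho(\operatorname{Fix}(\psi))$, so conjugate transformations fix the same number of points. Granting this, the three similarity classes above partition $\PGL_2(k)$, and the corresponding values $\infty, 1, 2$ of $|C_{\tau_1} \cap C_{\tau_2}|$ are pairwise distinct; hence each of (1), (2), (3) follows, with case (3) identifying $\psi = \id_{\PP^1}$ with $\tau_1 = \tau_2$. As a sanity check, this matches Bézout's theorem in $\PP^1 \times \PP^1$: two divisors of bidegree $(1,1)$ meet with total intersection number $2$, so set-theoretically one finds either two transverse points, one double point, or coincident curves, exactly the three alternatives above. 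The Jordan classification and the eigenline computations themselves are routine linear algebra over the algebraically closed field $k$.
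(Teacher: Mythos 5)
Your proposal is correct and follows essentially the same route as the paper: both reduce the intersection $C_{\tau_1}\cap C_{\tau_2}$ to the fixed points of $\tau_2^{-1}\tau_1$ (equivalently, the eigenlines of a representing matrix), invoke the similarity classification in $\PGL_2(k)$ into the diagonal, parabolic, and identity classes, and count fixed points ($2$, $1$, $\infty$) in each case. Your additional remarks (the bijection via $\pi_1$ and the similarity-invariance of the fixed-point count) only make explicit steps the paper leaves implicit.
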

		
		\begin{proof}
			For $(p,q) \in \PP^1 \times \PP^1$, $(p,q) \in C_{\tau_1} \cap C_{\tau_2}$ if and only if $q=\tau_{1}(p)$ and $\tau_2^{-1}\tau_1(p)=p$
			if and only if $q=\tau_1(p)$ and a representative of $p$ is an eigenvector of a representative of $\tau_2^{-1}\tau_1$.
			Viewing $\tau_2^{-1}\tau_1$ as an element of $\PGL_2(k)$,
			it is similar to one of the following matrices;
			\begin{center}
				(1) $\begin{pmatrix}
					\l & 0 \\
					0 & 1
				\end{pmatrix}$ \ (for some $0,1 \neq \l \in k$),\ (2) $\begin{pmatrix}
				1 & 1 \\
				0 & 1
				\end{pmatrix}$,\ (3) $\begin{pmatrix}
				1 & 0 \\
				0 & 1
				\end{pmatrix}$.
			\end{center}
			
			(1)  
			$\tau_2^{-1}\tau_1 \sim
			\begin{pmatrix}
				\l & 0 \\
				0 & 1
			\end{pmatrix}$ in $\PGL_2(k)$ for some $\l \in k$ with $\l \neq 0,1$
			if and only if a representative of $\tau_2^{-1}\tau_1$ has two distinct non-zero eigenvectors, that is, $|C_{\tau_1} \cap C_{\tau_2}|=2$.
			
			(2) 
			$\tau_2^{-1}\tau_1 \sim
			\begin{pmatrix}
				1 & 1 \\
				0 & 1
			\end{pmatrix}$ in $\PGL_2(k)$
			if and only if a representative of $\tau_2^{-1}\tau_1$ has only one non-zero eigenvector, that is, $|C_{\tau_1} \cap C_{\tau_2}|=1$.
			
			(3)
			$\tau_2^{-1}\tau_1 \sim
			\begin{pmatrix}
				1 & 0 \\
				0 & 1
			\end{pmatrix}$ in $\PGL_2(k)$ if and only if $\tau_1=\tau_2$, that is, $|C_{\tau_1} \cap C_{\tau_2}|=\infty$.
		\end{proof}
		
		
		In this paper, we define the types of geometric pairs $(E,\sigma)$ as follows:
		\begin{description}
			\item[{\rm (1) Type P}] $E$ is $\PP^1 \times \PP^1$, and $\sigma=(\id_{\PP^1} \times \tau) \circ \nu \in \Aut_k^G(\PP^1 \times \PP^1)$
			(Type P is divided into Type P$_i$ ($i=1,2$) in terms of the Jordan canonical form of $\tau$ ).
			\item[{\rm (2) Type S}] $E=C_{\tau_1} \cup C_{\tau_2}$ is a union of two irreducible divisors of bidegree $(1,1)$ in $\PP^1 \times \PP^1$
			such that $|C_{\tau_1} \cap C_{\tau_2}|=2$. Type S is divided into Type S$_i$ ($i=1,2$) in terms of $\sigma$;
			\item[{\rm (2-1) Type S$_1$}] $\sigma$ fixes each component.
			\item[{\rm (2-2) Type S$_2$}] $\sigma$ switches each component.
			\item[{\rm (3) Type T}] $E=C_{\tau_1} \cup C_{\tau_2}$ is a union of two irreducible divisors of bidegree $(1,1)$ in $\PP^1 \times \PP^1$
			such that $|C_{\tau_1} \cap C_{\tau_2}|=1$
			Type T is divided into Type T$_i$ ($i=1,2$) in terms of $\sigma$;
			\item[{\rm (3-1) Type T$_1$}] $\sigma$ fixes each component.
			\item[{\rm (3-2) Type T$_2$}] $\sigma$ switches each component.
		\end{description}
		
		
		We introduce a useful notion to classify $E$ up to $2$-equivalence.
		
		\begin{definition}\label{def.sim}
			Let $\tau_1, \tau_2, \tau_1', \tau_2' \in \Aut_k \PP^1$.
			We say that two pairs $(\tau_1,\tau_2)$ and $(\tau'_1,\tau'_2)$ are \textit{similar}, denoted by $(\tau_1,\tau_2) \sim (\tau'_1,\tau'_2)$,
			if there exists $\mu \in \Aut_k \PP^1$ such that $\tau'_i=\mu^{-1}\tau_i\mu$ where $i=1,2$.
		\end{definition}
		
		\begin{lemma}\label{lem.Ctau}
			Let $\tau_1, \tau_2, \tau_1', \tau_2' \in \Aut_k \PP^1$.
			Then the following statements hold;
			\begin{enumerate}
				\item[{\rm (1)}] $(\tau'_1,\tau'_2) \sim (\tau_1,\tau_2)$ if and only if there exists $\mu \in \Aut_k \PP^1$ such that
				$(\mu \times \mu)(C_{\tau'_i})=C_{\tau_i}$ where $i=1,2$.
				\item[{\rm (2)}] ${\tau'_2}^{-1}\tau'_1 \sim \tau_2^{-1}\tau_1$ if and only if there exist $\mu_1, \mu_2 \in \Aut_k \PP^1$ such that
				$(\mu_1 \times \mu_2)(C_{\tau'_i})=C_{\tau_i}$ where $i=1,2$.
			\end{enumerate}
		\end{lemma}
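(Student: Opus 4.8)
The plan is to reduce both parts to a single algebraic identity describing how a product automorphism acts on a graph $C_\tau$. First I would record the key computation: for $\mu_1,\mu_2,\tau \in \Aut_k \PP^1$ one has $(\mu_1 \times \mu_2)(C_\tau) = C_{\mu_2 \tau \mu_1^{-1}}$. This follows by writing $C_\tau = \{(p,\tau(p)) \mid p \in \PP^1\}$, applying $\mu_1 \times \mu_2$, and reparametrizing by $q = \mu_1(p)$ so that the second coordinate becomes $(\mu_2 \tau \mu_1^{-1})(q)$. I would also note the elementary fact that a graph determines its map, i.e. $C_\sigma = C_\rho$ if and only if $\sigma = \rho$, so that an equality of the form $(\mu_1 \times \mu_2)(C_\tau) = C_{\tau_0}$ translates exactly into the identity $\mu_2 \tau \mu_1^{-1} = \tau_0$ in $\Aut_k \PP^1$.

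For part (1), I would specialize to $\mu_1 = \mu_2 = \mu$. The key computation gives $(\mu \times \mu)(C_{\tau'_i}) = C_{\mu \tau'_i \mu^{-1}}$, and by the graph-determines-map observation the condition $(\mu \times \mu)(C_{\tau'_i}) = C_{\tau_i}$ for $i=1,2$ is equivalent to $\mu \tau'_i \mu^{-1} = \tau_i$, that is, $\tau'_i = \mu^{-1}\tau_i\mu$ for $i=1,2$. This is precisely the defining condition of $(\tau'_1,\tau'_2) \sim (\tau_1,\tau_2)$ in Definition \ref{def.sim}, with the same $\mu$ serving both sides, so part (1) is immediate.

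For part (2), the key computation shows that the existence of $\mu_1,\mu_2 \in \Aut_k \PP^1$ with $(\mu_1 \times \mu_2)(C_{\tau'_i}) = C_{\tau_i}$ for $i=1,2$ is equivalent to the existence of $\mu_1,\mu_2$ with $\tau_i = \mu_2 \tau'_i \mu_1^{-1}$ for $i=1,2$. For the ``if'' direction of the stated criterion I would simply eliminate $\mu_2$: from $\tau_i = \mu_2 \tau'_i \mu_1^{-1}$ one computes $\tau_2^{-1}\tau_1 = \mu_1({\tau'_2}^{-1}\tau'_1)\mu_1^{-1}$, so ${\tau'_2}^{-1}\tau'_1 \sim \tau_2^{-1}\tau_1$. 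The converse is the one step that is not purely formal and is where I expect the only real subtlety: given $\rho \in \Aut_k \PP^1$ with $\tau_2^{-1}\tau_1 = \rho({\tau'_2}^{-1}\tau'_1)\rho^{-1}$, I would set $\mu_1 := \rho$ and $\mu_2 := \tau_1 \rho {\tau'_1}^{-1}$, and then verify both equations $\tau_i = \mu_2 \tau'_i \mu_1^{-1}$. The case $i=1$ holds by construction, and the case $i=2$ reduces, after cancellation, exactly to the hypothesis $\tau_2^{-1}\tau_1 = \rho({\tau'_2}^{-1}\tau'_1)\rho^{-1}$; equivalently, one checks that the two a priori different formulas $\tau_1 \rho {\tau'_1}^{-1}$ and $\tau_2 \rho {\tau'_2}^{-1}$ for $\mu_2$ agree. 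The main thing to get right is this choice of $\mu_2$ together with the bookkeeping confirming consistency across $i=1,2$; everything else is direct substitution.
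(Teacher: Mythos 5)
Your proof is correct and takes essentially the same approach as the paper: both rest on the observation that $C_\tau$ is the graph of $\tau$, so that $(\mu_1 \times \mu_2)(C_{\tau'_i}) = C_{\tau_i}$ translates into the intertwining identity $\tau_i \mu_1 = \mu_2 \tau'_i$ (which the paper expresses as a commutative square). If anything, your write-up is more complete: in part (2) the paper simply asserts the equivalence with ${\tau'_2}^{-1}\tau'_1 \sim \tau_2^{-1}\tau_1$, whereas you exhibit the explicit witnesses $\mu_1 = \rho$ and $\mu_2 = \tau_1 \rho {\tau'_1}^{-1}$ for the nontrivial direction.
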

		
		\begin{proof}
			(1) Since $C_{\tau}$ is the graph of $\tau$, it follows that
			$(\tau'_1,\tau'_2) \sim (\tau_1,\tau_2)$ if and only if there exists $\mu \in \Aut_k \PP^1$ such that the diagram
			\begin{center}
				$\xymatrix{
					\PP^1 \ar[r]^-{\mu} \ar[d]_-{\tau'_i} & \PP^1 \ar[d]^-{\tau_i} \\
					\PP^1 \ar[r]_-{\mu} & \PP^1
				}$
			\end{center}
			commutes where $i=1,2$ if and only if there exists $\mu \in \Aut_k \PP^1$ such that $(\mu \times \mu)(C_{\tau'_i})=C_{\tau_i}$ where $i=1,2$.
			
			(2) Since $C_{\tau}$ is the graph of $\tau$, it follows that
			${\tau'_2}^{-1}\tau'_1 \sim \tau_2^{-1}\tau_1$ if and only if there exist $\mu_1,\mu_2 \in \Aut_k \PP^1$ such that the diagram
			\begin{center}
				$\xymatrix{
					\PP^1 \ar[r]^-{\mu_1} \ar[d]_-{\tau'_i} & \PP^1 \ar[d]^-{\tau_i} \\
					\PP^1 \ar[r]_-{\mu_2} & \PP^1
				}$
			\end{center}
			commutes where $i=1,2$ if and only if there exist $\mu_1,\mu_2 \in \Aut_k \PP^1$ such that $(\mu_1 \times \mu_2)(C_{\tau'_i})=C_{\tau_i}$ where $i=1,2$.
		\end{proof}
		
		
		
		
		\begin{lemma}\label{lem.equiv}
			Let $E=C_{\tau_1} \cup C_{\tau_2}$ and $E'=C_{\tau'_1} \cup C_{\tau'_2}$
			be unions of two irreducible divisors of bidegree $(1,1)$ in $\PP^1 \times \PP^1$
			where $\tau_i, \tau'_i \in \Aut_k \PP^1$ and $i=1,2$.
			Then the following statements hold:
			\begin{enumerate}
				\item[{\rm (1)}] $E \sim_2 E'$ if and only if $(\tau'_1,\tau'_2) \sim (\tau_1,\tau_2)$ or $(\tau'_1,\tau'_2) \sim (\tau_2,\tau_1)$.
				\item[{\rm (2)}] $E \sim E'$ if and only if $\tau_2'^{-1}\tau_1' \sim (\tau_2^{-1}\tau_1)^{\pm 1}$.
			\end{enumerate}
		\end{lemma}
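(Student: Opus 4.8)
The plan is to reduce both equivalences to the pair-similarity criteria already recorded in \lemref{lem.Ctau}, the only extra ingredient being the bookkeeping of how the two irreducible components of $E$ and $E'$ are matched up. The governing observation is that for $\mu_1,\mu_2 \in \Aut_k \PP^1$ one has $(\mu_1 \times \mu_2)(C_{\tau}) = C_{\mu_2 \tau \mu_1^{-1}}$; in particular the image of a graph curve under a product automorphism is again a graph curve. Since each $C_{\tau_i}$ is irreducible (\lemref{lem.CC}) and the decomposition of a variety into irreducible components is unique, any product automorphism carrying $E=C_{\tau_1}\cup C_{\tau_2}$ onto $E'=C_{\tau'_1}\cup C_{\tau'_2}$ must send each $C_{\tau_i}$ onto one of $C_{\tau'_1},C_{\tau'_2}$, inducing a permutation of the two components. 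Assuming, as in the cases of interest (Types S and T, where $\tau_1 \neq \tau_2$), that the two components are genuinely distinct, this permutation is either the identity or the transposition, which is precisely the source of the two alternatives in each statement.

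For (1), I would start from $E \sim_2 E'$, i.e.\ $E' = (\tau \times \tau)(E)$ for some $\tau \in \Aut_k \PP^1$, and split according to the induced permutation. If $(\tau \times \tau)(C_{\tau_i}) = C_{\tau'_i}$ for $i=1,2$, then $(\tau^{-1}\times\tau^{-1})(C_{\tau'_i})=C_{\tau_i}$, so \lemref{lem.Ctau} (1) (with $\mu=\tau^{-1}$) gives $(\tau'_1,\tau'_2)\sim(\tau_1,\tau_2)$; if instead $(\tau\times\tau)$ swaps the components, the same lemma applied to the reindexed pair yields $(\tau'_1,\tau'_2)\sim(\tau_2,\tau_1)$. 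The converse directions are obtained by running \lemref{lem.Ctau} (1) backwards and applying the inverse automorphism $\mu^{-1}\times\mu^{-1}$ to recover $E'$ from $E$.

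Part (2) follows the same template, now allowing an arbitrary $\mu_1\times\mu_2$ in the definition of $\sim$ (\dfnref{defin.2PE}). The identity matching gives, via \lemref{lem.Ctau} (2), the relation ${\tau'_2}^{-1}\tau'_1 \sim \tau_2^{-1}\tau_1$. For the transposed matching I would use the key algebraic point that interchanging the roles of the two components replaces the ``difference'' $\tau_2^{-1}\tau_1$ by $\tau_1^{-1}\tau_2 = (\tau_2^{-1}\tau_1)^{-1}$; thus \lemref{lem.Ctau} (2) applied to the swapped pair delivers ${\tau'_2}^{-1}\tau'_1 \sim (\tau_2^{-1}\tau_1)^{-1}$. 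Combining the two cases produces the exponent $\pm 1$, and the converse is again obtained by reversing \lemref{lem.Ctau} (2) and, in the inverse case, composing with the transposition before applying the inverse automorphism.

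The genuinely delicate part is not any single computation but the matching step together with the tracking of inverses and conjugations: one must verify that a product automorphism really permutes the two components (which rests on irreducibility and uniqueness of the irreducible decomposition, and also forces $E$ and $E'$ to have the same number of components), and one must keep straight the direction of each application of \lemref{lem.Ctau} so that the conjugation in (1) and the inversion in (2) land on the correct side. I would also record the degenerate possibility $\tau_1=\tau_2$ (a single component) separately; there the transposition alternative is redundant and both statements reduce immediately to the single-curve case of \lemref{lem.Ctau}.
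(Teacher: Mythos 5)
Your proposal is correct and takes essentially the same route as the paper: both arguments use irreducibility of the graph curves to reduce $E \sim_2 E'$ (resp.\ $E \sim E'$) to the statement that some $\mu \times \mu$ (resp.\ $\mu_1 \times \mu_2$) matches the two components up to a permutation, and then invoke \lemref{lem.Ctau} for each of the two matchings, which is exactly where the transposition alternative in (1) and the exponent $\pm 1$ in (2) arise. The paper's proof is simply a terser version of yours, leaving the component-matching step and the inverse-tracking implicit.
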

		
		\begin{proof}
			Since $C_{\tau_1}, C_{\tau_2}$ are irreducible, it holds that
			$E \sim_2 E'$ if and only if there exists $\mu \in \Aut_k \PP^1$ such that
			\begin{align*}
				&(\mu \times \mu)(C_{\tau_1})=C_{\tau'_1} \text{ and } (\mu \times \mu)(C_{\tau_2})=C_{\tau'_2}, \text{ or } \\
				&(\mu \times \mu)(C_{\tau_1})=C_{\tau'_2} \text{ and } (\mu \times \mu)(C_{\tau_2})=C_{\tau'_1}.
			\end{align*}
			Similarly, $E \sim E'$ if and only if there exist $\mu_1,\mu_2 \in \Aut_k \PP^1$ such that
			\begin{align*}
				&(\mu_1 \times \mu_2)(C_{\tau_1})=C_{\tau'_1} \text{ and } (\mu_1 \times \mu_2)(C_{\tau_2})=C_{\tau'_2}, \text{ or } \\
				&(\mu_1 \times \mu_2)(C_{\tau_1})=C_{\tau'_2} \text{ and } (\mu_1 \times \mu_2)(C_{\tau_2})=C_{\tau'_1}.
			\end{align*}
			By Lemma \ref{lem.Ctau}, each statement holds.
		\end{proof}

		
		The following theorem lists all possible defining relations of algebras in each type up to
		isomorphism of graded algebra.
		\begin{theorem}\label{thm.main1}
			Let $A=\cA(E,\sigma)$ be a $3$-dimensional cubic AS-regular algebra. For each type the following table describes
			\begin{description}
				\item[{\rm(I)}] the defining relations of $A$, and
				\item[{\rm (II)}] the conditions to be graded algebra isomorphic in terms of their defining relations.
			\end{description}
			Moreover, every algebra listed in the following table is AS-regular.
			In the following table, if $X \neq Y$ or $i \neq j$, then Type $X_i$ algebra is not isomorphic to any Type $Y_j$ algebra.
		\end{theorem}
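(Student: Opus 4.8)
The plan is to prove the three components of the theorem — the explicit relations (I), the graded-isomorphism criteria (II), and AS-regularity together with the distinctness of the types — by systematically exploiting the geometric dictionary of Section~\ref{sec.GA}. The key reduction is that, by \thmref{thm.G2}(1) and the remark following \dfnref{defin.2PE}, a geometric algebra depends up to graded isomorphism only on the $2$-equivalence class of $E$ together with the conjugacy data of $\sigma$; hence to list relations I may replace each geometric pair $(E,\sigma)$ by a convenient representative. So first I would fix, for each configuration, a normal form of the pair.

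For Type P, \lemref{lem.AutPP3} gives $\sigma=(\id_{\PP^1}\times\tau)\circ\nu$, and I would reduce $\tau$ to its Jordan canonical form, producing the two subtypes P$_1$ and P$_2$. For Types S and T, \thmref{thm.Bez} pins down the conjugacy class of $\tau_2^{-1}\tau_1$ (diagonalizable with distinct eigenvalues, resp.\ a single Jordan block), while \lemref{lem.Ctau} and \lemref{lem.equiv} reduce the pair $(\tau_1,\tau_2)$ to a one-parameter family; the admissible $\sigma$ are then those elements of $\Aut_k E$ satisfying $\pi_1\circ\sigma=\pi_2$, and enumerating them splits each of S and T according to whether $\sigma$ preserves or interchanges the components $C_{\tau_1}$ and $C_{\tau_2}$ (subtypes $1$ and $2$). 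With a representative pair in hand, I would extract the relations straight from condition (G2) of \dfnref{defin.GA}: the space $R$ consists of the $f\in V^{\otimes 3}$ vanishing on the graph $\{(p_1,p_2,(\pi_2\circ\sigma)(p_1,p_2)):(p_1,p_2)\in E\}$. Parametrizing $E$ and writing $\sigma$ explicitly turns this into a linear system on the eight coefficients of a cubic form in two variables; solving it yields the two generators tabulated as $g_1,g_2$. This elimination is the lengthy but routine heart of part (I).

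For AS-regularity I would appeal to \thmref{thm.ATV}: for each listed algebra I exhibit the coefficient matrix ${\bf M}$ with ${\bf f}={\bf Mx}$, check standardness (most cleanly by recognizing the relations as the partial derivatives of a twisted superpotential with linearly independent $\partial_x(w),\partial_y(w)$, via \lemref{lem.tsp}), and verify that the common zero locus of the entries of ${\bf M}$ is empty in $\PP^1\times\PP^1$. The isomorphism criterion (II) is then read off from \thmref{thm.G1}(1) and \thmref{thm.G2}(1): $A\cong A'$ as graded algebras precisely when some $\tau\times\tau$ restricts to an isomorphism $E\to E'$ intertwining $\sigma$ and $\sigma'$, and I would translate this commuting square into explicit algebraic relations among the parameters of the defining relations. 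Distinctness across types follows because $2$-equivalence preserves the discrete invariants that separate the types: whether $E$ is irreducible ($\PP^1\times\PP^1$, Type P) or a union of two $(1,1)$-divisors, the value $|C_{\tau_1}\cap C_{\tau_2}|\in\{1,2\}$ distinguishing S from T via \thmref{thm.Bez}, and whether $\sigma$ fixes or swaps the components; none of these can change under a diagonal automorphism $\tau\times\tau$ commuting with $\sigma$, and within Type P the Jordan type of $\tau$ is likewise an invariant.

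I expect the main obstacle to be twofold. The first is carrying out the (G2) elimination uniformly enough across all subtypes that the resulting relations appear in a clean, comparable normal form — degenerate parameter values and the switching subtypes S$_2$, T$_2$ are likely to require separate bookkeeping. The second, and more delicate, is matching the isomorphism criterion (II) exactly to the geometric data: I must verify that the one-parameter normal forms chosen above are both exhaustive and non-redundant, so that the commuting-square condition of \thmref{thm.G1} translates into the stated conditions on the defining relations with no spurious identifications and no missing cases.
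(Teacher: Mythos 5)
Your plan follows the same route as the paper's actual proof (geometric normal forms, relations extracted from (G2), AS-regularity via \thmref{thm.ATV}, isomorphism criteria via \thmref{thm.G1}(1) and \thmref{thm.G2}(1)), but it has a genuine gap at the point where you pass from the geometric normal form to the tabulated relations. The $2$-equivalence reduction does \emph{not} produce the parameter families appearing in Table~1. For Type T, \lemref{lem.equiv} only lets you assume $\tau_2^{-1}\tau_1=\left(\begin{smallmatrix}1&1\\0&1\end{smallmatrix}\right)$; normalizing the intersection point then gives a \emph{two}-parameter family $\tau_1=\left(\begin{smallmatrix}1&\b\\0&\d\end{smallmatrix}\right)$, $\tau_2=\left(\begin{smallmatrix}1&\b-1\\0&\d\end{smallmatrix}\right)$, and the (G2) elimination yields relations depending on both $\b$ and $\d$. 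The one-parameter relations listed under Type T$_1$ are exactly the case $\d=1$, and similarly the constraint $\a^2\neq\b^2$ in Type S is not forced by the geometry. What cuts these families down is AS-regularity itself, and your proposal contains no step that does this: you treat Step~2 as verification only (``for each listed algebra I exhibit ${\bf M}$ \dots''), which presupposes the list and therefore cannot establish that the table is \emph{complete}, i.e.\ that every AS-regular $\cA(E,\sigma)$ of the given type has relations of the tabulated form.

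The paper closes this gap with a necessity argument: every $3$-dimensional cubic AS-regular algebra is the derivation-quotient algebra $\cD(w)$ of a twisted superpotential $w$ (the Remark in Section~2, citing Dubois-Violette, Bocklandt--Schedler--Wemyss and Mori--Smith), and a direct computation shows that $w_0=(ax+cy)f_1+(bx+dy)f_2$ can be a twisted superpotential only if $\d=\pm1$, with $\d=-1$ degenerating to Type P; the converse direction then proceeds as you describe (explicit $w$, \lemref{lem.tsp}, emptiness of the zero locus of ${\bf M}$, \thmref{thm.ATV}). Alternatively you could run the ``only if'' half of \thmref{thm.ATV} over the whole $(\b,\d)$-family to determine which members are standard with empty zero locus, but some argument of this kind must be added; without it, part (I) of the theorem — and hence also the parameter ranges to which the criteria in part (II) apply — is not proved.
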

		
		\noindent{\renewcommand\arraystretch{1.5} 
			\begin{tabular}{|p{0.7cm}|p{5.0cm}|p{5.6cm}|}
				\multicolumn{3}{c}{Table 1 : Defining relations and conditon (II)}
				\\
				\hline
				{\footnotesize Type}  &\quad (I) defining relations
				&\quad (II) condition to be  \\ 
				&\quad ($\a,\b \in k$) 
				&\quad graded algebra isomorphic \\ 
				\hline
		\end{tabular}}
		
		\noindent{\renewcommand\arraystretch{1.5} 
			\begin{tabular}{|p{0.7cm}|p{5.0cm}|p{5.6cm}|} \hline
				P$_1$   &
				$
				\begin{cases}
					x^2y-\a yx^2, \\
					xy^2-\a y^2x \ \ \text{($\a \neq 0$)}
				\end{cases}
				$
				&\quad
				$\a'=\a^{\pm 1}$
				\\ \hline
				P$_2$ &
				$
				\begin{cases}
					x^2y-yx^2+yxy, \\
					xy^2-y^2x+y^3
				\end{cases}
				$
				&
				\ \hfill --------------------- \hfill \rule{0pt}{10pt}
				\\ \hline
				S$_1$  &
				$
				\begin{cases}
					\a\b x^2y+(\a+\b) xyx+yx^2, \\
					\a\b xy^2+(\a+\b) yxy+y^2x
				\end{cases}
				$
				
				\text{($\a\b \neq 0$, $\a^2 \neq \b^2$)}
				&\quad
				$\{\a',\b'\}=\{\a,\b\}, \{\a^{-1},\b^{-1}\}$
				\\ \hline
				S$_2$ &
				\begin{minipage}{150pt}
					$
					\begin{cases}
						xy^2+y^2x+(\a+\b)x^3, \\
						x^2y+yx^2+(\a^{-1}+\b^{-1})y^3
					\end{cases}
					$
					
					\text{($\a\b \neq 0$, $\a^2 \neq \b^2$)}
				\end{minipage}
				&\quad
				$\dfrac{\a'}{\b'}=\left(\dfrac{\a}{\b}\right)^{\pm 1}$
				\\ \hline
			\end{tabular}}
		
	\noindent{\renewcommand\arraystretch{1.5} 
\begin{tabular}{|p{0.7cm}|p{5.0cm}|p{5.6cm}|} \hline
				T$_1$ &
				\begin{minipage}{150pt}
					$
					\begin{cases}
						x^2y-2xyx+yx^2 \\
						\hfill -2(2\b-1)yxy \\
						\hfill +2(2\b-1)xy^2 \\
						\hfill +2\b(\b-1)y^3, \\
						xy^2-2yxy+y^2x 
					\end{cases}
					$
				\end{minipage}
				&\quad
				$\b'=\b, 1-\b$
				\\ \hline
				T$_2$ &
				\begin{minipage}{150pt}
					$
					\begin{cases}
						x^2y+2xyx+yx^2+2y^3, \\
						xy^2+2yxy+y^2x
					\end{cases}
					$ 
				\end{minipage}
				&
				\ \hfill --------------------- \hfill \rule{0pt}{10pt}
				\\ \hline
					%
		\end{tabular}}
		\vspace*{1em}
		
		The following theorem lists all possible defining relations of algebras in each type up to graded Morita equivalence.
		
		\begin{theorem}\label{thm.main2}
			Let $A=\cA(E,\sigma)$ be a $3$-dimensional cubic AS-regular algebra. For each type the following table describes
			\begin{description}
				\item[{\rm (I)}] the defining relations of $A$, and
				\item[{\rm (III)}] the conditions to be graded Morita equivalent in terms of their defining relations.
			\end{description}
			Moreover, every algebra listed in the following table is AS-regular.
			In the following table, if $X \neq Y$, then Type $X$ algebra is not graded Morita equivalent to any Type $Y$ algebra.
		\end{theorem}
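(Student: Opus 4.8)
The plan is to run the same geometric pipeline that establishes \thmref{thm.main1}, but with graded Morita equivalence in place of graded algebra isomorphism, so that \thmref{thm.G1}(2) and \thmref{thm.G2}(2) (the relation $E\sim E'$ and a \emph{sequence} $\{\tau_n\}$) replace \thmref{thm.G1}(1) and \thmref{thm.G2}(1) (the relation $E\sim_2 E'$ and a single $\tau$). First I would record that column (I) and the AS-regularity claim coincide with \thmref{thm.main1}. Concretely, for each type I fix a normal-form geometric pair $(E,\sigma)$, reducing $E$ by \lemref{lem.AutPP3}, \lemref{lem.CC} and \thmref{thm.Bez} and pinning down the admissible $\sigma\in\Aut_k^G E$ via \lemref{lem.Ctau} and \lemref{lem.equiv}; I then compute $R=\{f\in V^{\otimes 3}\mid f(p_1,p_2,(\pi_2\circ\sigma)(p_1,p_2))=0\ \forall (p_1,p_2)\in E\}$ directly from (G2), and verify AS-regularity through \thmref{thm.ATV} by checking that each presentation is standard with empty common zero locus of $\mathbf M$ in $\PP^1\times\PP^1$ (equivalently, by exhibiting each pair of relations as a derivation-quotient of a twisted superpotential).

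For the graded Morita conditions in column (III), I would combine \thmref{thm.G1}(2) and \thmref{thm.G2}(2): one has $\GrMod A\cong\GrMod A'$ if and only if there is a sequence $\{\tau_n\}\subset\Aut_k\PP^1$ with each $\tau_n\times\tau_{n+1}\colon E\to E'$ an isomorphism making the index-shifted square $\sigma'\circ(\tau_n\times\tau_{n+1})=(\tau_{n+1}\times\tau_{n+2})\circ\sigma$ commute for all $n$. The forward direction forces $E\sim E'$, which by \lemref{lem.equiv}(2) means $\tau_2'^{-1}\tau_1'\sim(\tau_2^{-1}\tau_1)^{\pm 1}$ in the reducible (S and T) cases, and is automatic when $E=\PP^1\times\PP^1$; I then translate this conjugacy, together with the residual constraint imposed by the commuting square, into the stated relation on the parameters $\alpha,\beta$. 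For the converse I would exhibit an explicit solution $\{\tau_n\}$ of the recursion above, which by \lemref{lem.Zhang} and \thmref{thm.Zhang} yields the desired equivalence (consistently with \thmref{thm.GM}, that twisting preserves geometricity).

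To see that distinct major types are never graded Morita equivalent, I would use that such an equivalence forces $E\sim E'$ by \thmref{thm.G1}(2), so it suffices to show the three families of varieties are pairwise inequivalent. Type P has $E=\PP^1\times\PP^1$, which is irreducible, whereas Types S and T have reducible $E=C_{\tau_1}\cup C_{\tau_2}$; and Types S and T are separated by $|C_{\tau_1}\cap C_{\tau_2}|$ ($2$ versus $1$), which by \thmref{thm.Bez} records the Jordan type of $\tau_2^{-1}\tau_1$ and is an invariant of the equivalence class by \lemref{lem.equiv}(2). Hence no cross-type equivalence exists.

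The hard part will be the faithful bookkeeping in the converse of the Morita criterion: unlike the isomorphism case, the square in \thmref{thm.G2}(2) is index-shifted, so realizing it is a genuine recursion in $\{\tau_n\}$ rather than a single conjugation, and I must verify that a globally consistent sequence exists and that the resulting condition on $(\alpha,\beta)$ matches column (III) exactly. Closely related is ensuring completeness and non-redundancy of the list, namely that every admissible $(E,\sigma)$ falls into exactly one normal form and that the degenerate parameter values (such as $\alpha^2=\beta^2$, coincident eigenvalues of $\tau_2^{-1}\tau_1$, or the collision $C_{\tau_1}=C_{\tau_2}$) are correctly excluded so that $E$ remains a union of two distinct irreducible divisors of bidegree $(1,1)$.
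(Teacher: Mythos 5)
Your proposal is correct and, for most steps, matches the paper's own proof: the paper establishes Theorems \ref{thm.main1} and \ref{thm.main2} together by the same pipeline (normal forms via Theorem \ref{thm.Bez} and Lemma \ref{lem.equiv}, relations from (G2), AS-regularity via twisted superpotentials, Lemma \ref{lem.tsp} and Theorem \ref{thm.ATV}), and its Step 4 is exactly your combination of Theorems \ref{thm.G1}(2) and \ref{thm.G2}(2): it exhibits an explicit arithmetic sequence $\{\mu_i\}$ solving the index-shifted recursion to prove every Type T$_1$ algebra is graded Morita equivalent to the $\b=\tfrac{1}{2}$ normal form, and the cross-type separation is indeed the invariance of $E$ up to $\sim$ that you describe. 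The one place you take a genuinely different route is the collapse of the subtypes (T$_1$ with T$_2$, and analogously for S and P), which is what reduces the rows of Table 1 to those of Table 2: rather than building a second sequence, the paper writes the two normal forms as derivation-quotient algebras $\cD(w')$ and $\cD(w'')$ of superpotentials, checks that $w'=(w'')^{\theta}$ is a Mori--Smith twist for a suitable $\theta\in\Aut(w'')$, and concludes $\GrMod A'\cong\GrMod A''$ from Lemma \ref{lem.dq} and Theorem \ref{thm.Zhang}. Your all-geometric alternative is workable, but note that any sequence $\{\tau_n\}$ intertwining a component-fixing $\sigma$ with a component-switching $\sigma'$ must alternate between component-preserving and component-swapping isomorphisms $E\to E'$, which is precisely the global-consistency bookkeeping you flag as the hard part; the paper's MS-twist argument avoids that recursion entirely, at the price of invoking the superpotential machinery. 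One minor imprecision: column (I) of Table 2 does not literally coincide with that of Table 1 --- it lists a single Morita normal form per type (e.g.\ parameter-free relations for Types P and T) --- though each entry is a specialization of a Table 1 family, so AS-regularity does carry over as you claim.
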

		
		\noindent{\renewcommand\arraystretch{1.5} 
			\begin{tabular}{|p{0.7cm}|p{5.0cm}|p{5.6cm}|}
				\multicolumn{3}{c}{Table 2 : Defining relations and condition (III)}
				\\
				\hline
				{\footnotesize Type}  &\quad (I) defining relations
				&\quad (III) condition to be  \\ 
				&\quad ($\a, \b \in k$) 
				&\quad graded Morita equivalent \\ 
				\hline
		\end{tabular}}
		
		\noindent{\renewcommand\arraystretch{1.5} 
			\begin{tabular}{|p{0.7cm}|p{5.0cm}|p{5.6cm}|} \hline
				P   &
				$
				\begin{cases}
					x^2y-yx^2, \\
					xy^2-y^2x
				\end{cases}
				$
				&
				\ \hfill --------------------- \hfill \rule{0pt}{10pt}
				\\ \hline
				S &
				$
				\begin{cases}
					\a\b x^2y+(\a+\b) xyx+yx^2, \\
					\a\b xy^2+(\a+\b) yxy+y^2x
				\end{cases}
				$
				
				\text{($\a\b \neq 0$, $\a^2 \neq \b^2$)}
				&\quad 
				$\dfrac{\a'}{\b'}=\left(\dfrac{\a}{\b}\right)^{\pm 1}$
				\\ \hline
					%
				T &
				\begin{minipage}{150pt}
					$
					\begin{cases}
						x^2y+yx^2+2xy^2 \\
						\hfill -2xyx-2yxy, \\
						xy^2+y^2x-2yxy
					\end{cases}
					$ 
				\end{minipage}
				&
				\ \hfill --------------------- \hfill \rule{0pt}{10pt}
				\\ \hline
		\end{tabular}}
		\vspace*{1em}
		
		For each type, Theorem \ref{thm.main1} and Theorem \ref{thm.main2} are proved by the following
		five steps:
		\begin{description}
			\item[Step 0] Find $\tau_1, \tau_2$ for $E=C_{\tau_{1}} \cup C_{\tau_2}$.
			\item[Step 1] Find the defining relations of $\cA(E,\sigma)$ for each $\sigma \in \Aut^G_k E$
			by using (G2) condition in Definition \ref{defin.GA}.
			\item[Step 2] Check AS-regularity of $\cA(E,\sigma)$.
			\item[Step 3] Classify them up to isomorphism of graded algebras in terms of their
			defining relations by using Theorems \ref{thm.G1} (1) and \ref{thm.G2} (1).
			\item[Step 4] Classify them up to graded Morita equivalence in terms of their
			defining relations by using Theorems \ref{thm.G1} (2), \ref{thm.G2} (2).
		\end{description}
		
		\textit{Proof of Theorem \ref{thm.main1} and Theorem \ref{thm.main2}.}\
		We will give a proof for Type T$_1$ and T$_2$, that is,
		$E=C_{\tau_1} \cup C_{\tau_2}$ is a union of two irreducible divisors of bidegree $(1,1)$
		in $\PP^1 \times \PP^1$ such that $|C_{\tau_1} \cap C_{\tau_2}|=1$.
		For the other types, the proofs are similar.
		
		\underline{Step 0}: Let  $E=C_{\tau_1} \cup C_{\tau_2}$ be a union of two irreducible divisors of bidegree $(1,1)$ in $\PP^1 \times \PP^1$
		with $|C_{\tau_1} \cap C_{\tau_2}|=1$, and $\sigma \in \Aut_k^G E$.
		By Theorem \ref{thm.Bez}, $\tau_2^{-1}\tau_1 \sim
		\begin{pmatrix}
			1 & 1 \\
			0 & 1
		\end{pmatrix}$.
		Then there exists $P \in \PGL_2(k)$ such that $P^{-1}\tau_2^{-1}\tau_1P=
		\begin{pmatrix}
			1 & 1 \\
			0 & 1
		\end{pmatrix}$. 
		By Lemma \ref{lem.equiv}, we may assume that $\tau_2^{-1}\tau_1=
		\begin{pmatrix}
			1 & 1 \\
			0 & 1
		\end{pmatrix}$.
		Take $(p_0,q_0) \in C_{\tau_1} \cap C_{\tau_2}$. Since $\tau_2^{-1}\tau_1(p_0)=p_0$, we have that $p_0=(1,0)$.
		Since an automorphism of $E$ preserves intersection $C_{\tau_1} \cap C_{\tau_2}$,
		$\sigma(p_0,q_0)=(p_0,q_0)$, so $q_0=p_0=(1,0)$.
		We write $\tau_1=
		\begin{pmatrix}
			\a & \b \\
			\g & \d
		\end{pmatrix}$ where $\a,\b,\g,\d \in k$ with $\a\d-\b\g \neq 0$. Since $(p_0,p_0) \in C_{\tau_1}$, we have that
		\begin{center}
			$p_0=\tau_1(p_0)=(\a,\g)$,
		\end{center}
		so $\a \neq 0$ and $\g=0$. Then we may assume that $\a=1$. We have that $\tau_2=\tau_1
		\begin{pmatrix}
			1 & -1 \\
			0 & 1
		\end{pmatrix}=
		\begin{pmatrix}
			1 & \b-1 \\
			0 & \d
		\end{pmatrix}$.
		
		\underline{Step 1}: Let $A_1=\cA(E,\sigma)$ be a geometric algebra
		of Type T$_1$.
		Since $\sigma$ fixes each complonent, $\sigma$ is uniquely determined as follows;
		\begin{center}
			$\sigma|_{C_{\tau_1}}(p,\tau_1(p))=(\tau_1(p),\tau_1^2(p))$,
			$\sigma|_{C_{\tau_2}}(p,\tau_2(p))=(\tau_2(p),\tau_2^2(p))$.
		\end{center}
		By using (G2) condition, we have that $A_1=\kxy/(f_1,f_2)$;
		\begin{center}
			$\begin{cases}
				f_1=x^2y-\d(\d+1)xyx+\d^3yx^2+(2\b-1)(\d+1)xy^2 \\
				\hfill-(2\b-1)(\d+1)\d yxy+\b(\b-1)(\d+1)y^3, \\
				f_2=xy^2-(\d+1)yxy+\d y^2x
			\end{cases}$
		\end{center}
		where $\b,\d \in k$.
		Let $A_2=\cA(E,\sigma)$ be a geometric algebra
		of Type T$_2$.
		Since $\sigma$ switches each component, $\sigma$ is uniquely determined as follows;
		\begin{center}
			$\sigma|_{C_{\tau_1}}(p,\tau_1(p))=(\tau_1(p),\tau_2\tau_1(p))$,
			$\sigma|_{C_{\tau_2}}(p,\tau_2(p))=(\tau_2(p),\tau_1\tau_2(p))$.
		\end{center}
		By using (G2) condition, we have that $A_2=\kxy/(g_1,g_2)$;
		\begin{center}
			$\begin{cases}
				g_1=x^2y+\d(\d-1)xyx-\d^3yx^2+(2\b-1)\d yxy \\
				\hfill+\b(\b-1)(\d-1)y^3, \\
				g_2=xy^2-(\d-1)yxy-\d y^2x+(2\b-1)y^3
			\end{cases}$
		\end{center}
		where $\b,\d \in k$.
		
		\underline{Step 2}:
		We will give a proof for Type T$_1$.
		For Type T$_2$, the proof is similar.
		Let $A_1=k\langle x,y \rangle/(f_1,f_2)$ be a geometric algebra of Type T$_1$;
		\begin{center}
			$\begin{cases}
				f_1=x^2y-\d(\d+1)xyx+\d^3yx^2+(2\b-1)(\d+1)xy^2 \\
				\hfill-(2\b-1)(\d+1)\d yxy+\b(\b-1)(\d+1)y^3, \\
				f_2=xy^2-(\d+1)yxy+\d y^2x
			\end{cases}$
		\end{center}
		where $\b,\d \in k$.
		If $A_1$ is a $3$-dimensional cubic AS-regular algebra, then
		there exists a twisted superpotential $w$ such that $A_1=\cD(w)$.
		By the calculation, if $w_0:=(ax+cy)f_1+(bx+dy)f_2$
		is a twisted superpotential, then $\d=\pm1$.
		If $\d=-1$, then $A$ is of Type P, so we have that $\d=1$.
		Conversely, we assume that $\d=1$.
		In this case, $A_1=\cD(w)$ where
		\begin{align*}
			w=x^2y^2-2xyxy+xy^2x&+yx^2y-2yxyx+y^2x^2 \\
			&+2(2\b-1)yxy^2-2(2\b-1)y^2xy+2\b(\b-1)y^4.
		\end{align*}
		For any $\b \in k$,
		we take $\theta=
		\begin{pmatrix}
			1 & 2(2\b-1) \\
			0 & 1
		\end{pmatrix} \in \GL_2(k)$. Then we have that $(\tau \otimes \id^{\otimes 3})(\varphi(w))=w$,
		so $w$ is a twisted superpotential.
		By Lemma \ref{lem.tsp}, $A_1$ is standard.
		If we write ${\bf f}={\bf Mx}$ where ${\bf x}=(x,y)^{t}$, ${\bf f}=(f_1,f_2)^{t}$,
		then ${\bf M}=
		\begin{pmatrix}
			-2xy+yx & x^2+2(2\b-1)xy-2(2\b-1)yx+2\b(\b-1)y^2 \\
			y^2 & xy-2yx
		\end{pmatrix}$, so the common zero set of the entries of ${\bf M}$ is empty in $\PP^1 \times \PP^1$.
		By Theorem \ref{thm.ATV}, $A_1=\cD(w)$ is a $3$-dimensional cubic AS-regular algebra.
		
		\underline{Step 3}:
		Let $A=\cA(E,\sigma), A'=\cA(E',\sigma')$ be $3$-dimensional cubic AS-regular algebras of Type T$_1$
		where $E=C_{\tau_{1}} \cup C_{\tau_2}$, $E'=C_{\tau'_{1}} \cup C_{\tau'_2}$,
		$\tau_{1}=
		\begin{pmatrix}
			1 & \b \\
			0 & 1
		\end{pmatrix}$, $\tau_2=
		\begin{pmatrix}
			1 & \b-1 \\
			0 & 1
		\end{pmatrix}$, $\tau'_1=
		\begin{pmatrix}
			1 & \b' \\
			0 & 1
		\end{pmatrix}$, $\tau'_2=
		\begin{pmatrix}
			1 & \b'-1 \\
			0 & 1
		\end{pmatrix}$.
		By Theorem \ref{thm.G1} and Lemma \ref{lem.equiv},
		if $A \cong A'$, then $(\tau'_1,\tau'_2) \sim (\tau_1,\tau_2), (\tau_2,\tau_1)$.
		By the calculation, if $(\tau'_1,\tau'_2) \sim (\tau_1,\tau_2)$, then $\b'=\b$, and
		if $(\tau'_1,\tau'_2) \sim (\tau_2,\tau_1)$, then $\b'=1-\b$.
		Conversely, if $\b'=\b, 1-\b$, then it is clear that $A \cong A'$.
		
		We next show that every $3$-dimensional cubic AS-regular algebra $A=\cA(E,\sigma)$ of Type T$_2$
		is isomorphic to $A'=\cA(E',\sigma')$ where
		$E=C_{\tau_{1}} \cup C_{\tau_2}$, $E'=C_{\tau'_{1}} \cup C_{\tau'_2}$,
		$\tau_{1}=
		\begin{pmatrix}
			1 & \b \\
			0 & -1
		\end{pmatrix}$, $\tau_2=
		\begin{pmatrix}
			1 & \b-1 \\
			0 & -1
		\end{pmatrix}$, $\tau'_1=
		\begin{pmatrix}
			1 & \frac{1}{2} \\
			0 & -1
		\end{pmatrix}$, $\tau'_2=
		\begin{pmatrix}
			1 & -\frac{1}{2} \\
			0 & -1
		\end{pmatrix}$.
		Take
		$\mu:=
		\begin{pmatrix}
			1 & \frac{1}{2}(\b-\frac{1}{2}) \\
			0 & -1
		\end{pmatrix} \in \PGL_2(k)$.
		Then we have that $\mu\tau_2=\tau'_1\mu$ and $\mu\tau_1=\tau'_2\mu$,
		so $(\tau_2,\tau_1) \sim (\tau'_1,\tau'_2)$.
		By Lemma \ref{lem.equiv} (1), we have that $E \sim_2 E'$. Moreover, the following diagram
		$$\xymatrix{
			E\ar[r]^-{\mu \times \mu}\ar[d]_-{\sigma}&E'\ar[d]^-{\sigma'}\\
			E\ar[r]_-{\mu \times \mu}&E'
		}$$
		commutes. By Theorem \ref{thm.G2} (1), we have that $A \cong A'$.
		
		\underline{Step 4}:
		Let $A'=\cA(E',\sigma')$ be a $3$-dimensional cubic AS-regular algebra of Type T$_1$
		where $E'=C_{\tau'_{1}} \cup C_{\tau'_2}$,
		$\tau'_{1}=
		\begin{pmatrix}
			1 & \frac{1}{2} \\
			0 & 1
		\end{pmatrix}$, $\tau'_2=
		\begin{pmatrix}
			1 & -\frac{1}{2} \\
			0 & 1
		\end{pmatrix}$.
		We show that every $3$-dimensional cubic-AS-regular algebra of Type T$_1$ is graded Morita
		equivalent to $A'$.
		Let $A=\cA(E,\sigma)$ be a $3$-dimensional cubic AS-regular algebra of Type T$_1$
		where $E=C_{\tau_{1}} \cup C_{\tau_2}$,
		$\tau_1=
		\begin{pmatrix}
			1 & \b \\
			0 & 1
		\end{pmatrix}$, $\tau_2=
		\begin{pmatrix}
			1 & \b-1 \\
			0 & 1
		\end{pmatrix}$.
		For any $i \in \ZZ$, we define an automorphism $\mu_i:\PP^1 \to \PP^1$ by
		\begin{center}
			$\mu_i:=
			\begin{pmatrix}
				1 & i(-\b+\frac{1}{2}) \\
				0 & 1
			\end{pmatrix}$
		\end{center}
		By the calculation, the following diagram
		$$\xymatrix@C=40pt{
			E'\ar@<0.8ex>[r]^-{\mu_i \times \mu_{i+1}}\ar[d]_-{\sigma'}&E\ar[d]^-{\sigma}\\
			E'\ar[r]_-{\mu_{i+1} \times \mu_{i+2}}&E
		}$$
		commutes for every $i \in \ZZ$.
		By Theorem \ref{thm.G2} (2), $\GrMod\,A \cong \GrMod\,B$.
		
		Let $A''=\cA(E'',\sigma'')$ be a $3$-dimensional cubic AS-regular algebra of Type T$_2$
		where $E''=C_{\tau''_{1}} \cup C_{\tau''_2}$,
		$\tau''_{1}=
		\begin{pmatrix}
			1 & \frac{1}{2} \\
			0 & -1
		\end{pmatrix}$, $\tau''_2=
		\begin{pmatrix}
			1 & -\frac{1}{2} \\
			0 & -1
		\end{pmatrix}$.
		Then $A'=\cD(w')$ and $A''=\cD(w'')$ where
		\begin{align*}
			w'=x^2y^2-2xyxy+xy^2x&+yx^2y-2yxyx+y^2x^2 \\
			&+2yxy^2-2y^2xy+2y^4, \\
			w''=x^2y^2+2xyxy+xy^2x&+yx^2y+2yxyx+y^2x^2 \\
			&+2yxy^2-2y^2xy+2y^4.
		\end{align*}
		Take $\theta=
		\begin{pmatrix}
			\sqrt{-1} & 0 \\
			0 & -\sqrt{-1}
		\end{pmatrix} \in \Aut(w'')$. Then we have that $w'=(w'')^{\theta}$, that is, $w'$ is a MS-twist of $w''$ by $\theta$.
		By Lemma \ref{lem.dq}, $A'=\cD(w') \cong \cD(w'')^{\theta}$.
		By Theorem \ref{thm.Zhang}, $\GrMod\,A' \cong \GrMod\,A$.

\end{document}